\documentclass{amsart}
\oddsidemargin =10mm
\evensidemargin =10mm
\topmargin =5mm
\textwidth =150mm
\textheight =200mm

\usepackage{stmaryrd}
\usepackage{mathrsfs}
\usepackage{amssymb}
\usepackage{amsmath}
\usepackage{titletoc}
\usepackage{extarrows}
\usepackage{varioref}
\usepackage{bm}
\usepackage{dsfont}
\usepackage[normalem]{ulem}
\usepackage[mathscr]{eucal}
\pagestyle{plain}
\usepackage{tikz}
\usepackage{tikz-cd}
\usepackage[all]{xy}
\usepackage{color}
\usepackage{soul}
\usepackage{amscd}
\usepackage{amsfonts}
\usepackage{verbatim}
\usepackage[colorlinks,linkcolor=blue,citecolor=blue, pdfstartview=FitH]{hyperref}
\usepackage{graphicx}

\newcommand\ord{\operatorname{ord}}

\newcommand\Spec{\operatorname{Spec}}
\newcommand\rig{\operatorname{rig}}
\newcommand\Tr{\operatorname{Tr}}
\newcommand\Vol{\operatorname{Vol}}

\newcommand\as{\operatorname{as}}
\newcommand\an{\operatorname{an}}

\numberwithin{equation}{section}
\newtheorem{Definition}{Definition}[section]
\newtheorem{Theorem}[Definition]{Theorem}
\newtheorem{Lemma}[Definition]{Lemma}
\newtheorem{Proposition}[Definition]{Proposition}

\begin{document}
\begin{abstract}
    In this article, we prove a comparison theorem between 
    the Dwork cohomology introduced by Adolphson and Sperber and the rigid cohomology. As a corollary, we can calculate the rigid cohomology of Dwork isocrystal on torus. 
\end{abstract}
\title{Exponential sums and rigid cohomology}
\thanks{I would like to thank Hao Zhang and 
Professor Lei Fu for helpful discussions.}
\author{Peigen Li}
\address{Yau Mathematical Sciences Center, Tsinghua University, Beijing 100084, P. R. China}
\email{lpg16@mails.tsinghua.edu.cn}
\maketitle
\section{Introduction}
Let $p$ be a prime number, $\mathbf{Q}_p$ the field of $p$-adic numbers, $k=\mathbf{F}_{q}$ the finite field with $q=p^a$ elements. Let $k_i$ be the extension of $k$ of degree $i$ and let $\bar{k}$ be the algebraic closure of $k$. Fix a primitive $p$-th root of unity $\zeta_p$. Let $K_0$ be the unramified extension of $\mathbf{Q}_p(\zeta_p)$ of degree $a$. Let $\Omega$ be the completion of an algebraic closure of $K_0$. Denote by ``ord" the additive valuation on $\Omega$ normalized by $\ord(p)=1$. The norm on $\Omega$ is given by $|u|=p^{-\ord(u)}$ for any $u\in \Omega$.

For a morphism $f: X\rightarrow \mathbf{A}_k^1$ with $X$ being a $k$-scheme of finite type of dimension $n$, and the nontrivial additive character $\psi:k\rightarrow K_0^{\times}$ defined by $ \psi(t)=\zeta_p^{\Tr_{k/\mathbf{F}_p}(t)}$, 
define exponential sums
$$
S_i(X,f)=\sum_{x\in X(k_i)}\psi(\Tr_{k_i/k}(f(x))).
$$
The $L$-function is defined by 
$$L(X,f,t)=\exp\Big(\sum_{i=1}^{\infty}S_i(X,f)t^i/i\Big).$$ According to \cite[Theorem 6.3]{Etesse}, we have
$$
L(X,f,t)=\prod_{i=0}^{2n}\det(I-tF^{*}|H_{c,\rig}^i(X/K_0,f^{*}\mathcal{L}_{\psi}))^{(-1)^{i+1}},
$$
where $\mathcal{L}_{\psi}$ is the Dwork $F$-isocrystal defined over $\mathbf{A}_k^1$ associated to $\psi$ and $F^{*}$ is the Frobenius endomorphism on the space $H_{c,\rig}^i(X/K_0,f^{*}\mathcal{L}_{\psi}$).

In \cite[section 4]{baldassarri2004dwork}, Baldassarri and Berthelot compare the Dwork cohomology and the rigid cohomology for singular hypersurfaces. We prove a similar comparison theorem for the complex introduced by Adolphson and Sperber in \cite[section 2]{Ado1989} to study
the exponential sums on the torus $\mathbf{T}_k^n$. Suppose that $f$ is defined by a Laurent polynomial
$$
f(x_1,,\cdots, x_n)=\sum_{j=1}^Na_jx^{w_j}\in k[x_1, x_1^{-1},\cdots, x_n, x_n^{-1}].
$$
Let $\Delta(f)$ be the Newton polyhedron at $\infty$ of $f$ which is defined to be the convex hull in $\mathbf{R}^n$ of the set $\left\{w_j\right\}_{j=1}^N\cup \left\{(0,\cdots,0)\right\}$ and let $\delta$ be the convex cone generated by $\left\{w_j\right\}_{j=1}^N$ in $\mathbf{R}^n$. Let $\Vol(\Delta(f))$ be the volume of $\Delta(f)$ with respect to Lebesgue measure on $\mathbf{R}^n$. We say $f$ is \textit{nondegenerate with respect to $\Delta(f)$} if for any face $\sigma$ of $\Delta(f)$ not containing the origin, the Laurent polynomials $\frac{\partial f_{\sigma}}{\partial x_i}$, $i=1,\cdots,n$ have no common zero in $(\bar{k}^{\times})^n$, where $f_{\sigma}=\sum_{w_j\in \sigma}a_jx^{w_j}$. Define a weight function on $\delta\cap\mathbf{Z}^n$ by
$$
w(u):=\inf\left\{c:\;u\in c\Delta(f),\;c\geq 0\right\}.
$$
Note that there exists some $M\in \mathbf{Z}_{>0}$ such that $w(\delta\cap\mathbf{Z}^n)\subset \frac{1}{M}\mathbf{Z}_{\geq0}$.

Consider the Artin-Hasse exponential series: 
$$
E(t)=\exp\Big(\sum_{i=0}^{\infty}\frac{t^{p^i}}{p^i}\Big).
$$
By \cite[Lemma 4.1]{dwork1}, the series $\sum_{i=0}^{\infty}\frac{t^{p^i}}{p^i}$ has a zero at $\gamma\in \mathbf{Q}_p(\zeta_p)$ such that $\ord\gamma=1/(p-1)$ and $\zeta_p\equiv 1+\gamma\mod\gamma^2$. Set 
$$\theta(t)=E(\gamma t)=\sum_{i\geq 0}\lambda_it^i.$$ 
The series $\theta(t)$ is a splitting function in Dwork's terminology \cite[\S 4a]{dwork1}. In particular, we have $\ord \lambda_i\geq i/(p-1)$ and $\theta(1)=\zeta_p$. 

Fix an $M$-th root $\widetilde{\gamma}$ of $\gamma$ in $\Omega$. Let $K=K_0(\widetilde{\gamma})$, and $\mathcal{O}_K$ the ring of integers of $K$. Let $\hat{a}_j\in K$ be the Techm\"uller lifting of $a_j$ and set 
$$\hat{f}(x)=\sum_{j=1}^N\hat{a}_jx^{\omega_j}\in K[x_1,x_1^{-1},\cdots,x_n,x_n^{-1}].$$
For any $b>0,c\in\mathbf{R}$, consider the following spaces :
\begin{displaymath}
L(b,c)=\left\{\sum_{u\in \delta\cap\mathbf{Z}^n }a_ux^u:\; a_u\in K,\;\ord(a_u)\geq bw(u)+c\right\},
\end{displaymath}
$$
L(b)=\bigcup_{c\in\mathbf{R}} L(b,c), \quad L_{\delta}^{\dagger}=\bigcup_{b>0}L(b).
$$ 
Set $\gamma_l=\sum\limits_{i=0}^{l}\gamma^{p^i}/p^i,h(t)=\sum\limits_{l=0}^{\infty}\gamma_lt^{p^l}$. Then we have 
$$
\theta(t)=\exp\Big(\sum_{l=0}^{+\infty}\frac{\gamma^{p^l}t^{p^l}}{p^l}\Big)=\exp\Big(\gamma t+\sum_{l=1}^{+\infty}(\gamma_l-\gamma_{l-1})t^{p^l}\Big)
=\exp(h(t)-h(t^p)).
$$ Define 
$$
H(x)=\sum_{j=1}^Nh(\hat{a}_jx^{w_j}),~ F_0(x)=\exp(H(x)-H(x^q)).
$$ 
The estimate $\ord(\lambda_i)\geq \frac{i}{p-1}$ implies that $H(x)$ and $F_0(x)$ are well defined as formal Laurent series. In fact, we have 
$$
H(x)\in L\Big(\frac{1}{p-1},0\Big),\;F_0(x)\in L\Big(\frac{p}{q(p-1)},0\Big).
$$
Define an operator $\psi_q$ on formal Laurent series by 
$$
\psi_q\Big(\sum_{u\in \mathbf{Z}^n}a_ux^u\Big)=\sum_{u\in\mathbf{Z}^n}a_{qu}x^u.$$
Let $\alpha=\psi_q\circ F_0$.   Formally, we have
$$
\alpha=\psi_q\circ \exp(H(x)-H(x^q))=\exp(-H(x))\circ \psi_q \circ \exp(H(x)).
$$
For $i=1,\cdots,n$, define operators $$E_i=x_i\partial/\partial x_i,~H_i=E_iH,~\hat{D}_i=E_i+H_i=E_i+E_iH.$$
Formally, we have
$$
\hat{D}_i=\exp(-H(x))\circ E_i \circ \exp(H(x))
$$
for $i=1,\cdots,n$.
Note that $\alpha$ and $\hat{D}_i$ operate on $L_{\delta}^{\dagger}$. One can show that $\hat{D}_i$ commute with one another. Denote by $K_{\cdot}(L_{\delta}^{\dagger},\underline{\hat{D}})$ the Koszul complex on $L^{\dagger}_{\delta}$ associated to $\hat{D}_1,\cdots,\hat{D}_n$. We have $\alpha\circ \hat{D}_i=q\hat{D}_i\circ \alpha$ for all $i$. This implies that $\alpha$ induces a chain map on $K_{\cdot}(L_{\delta}^{\dagger},\underline{\hat{D}})$. Now we can give the main theorem in the present paper.
\begin{Theorem}\label{Thm1} ${}$
We have an isomorphism
$$
H_{n-i}(K_{\cdot}(L_{\delta}^{\dagger},\underline{\hat{D}}))\xrightarrow{\sim}H^{i}_{\rig}(\mathbf{T}^n_k/K,f^{*}\mathcal{L}_{-\psi})
$$
for each $i$. The endomorphism $\alpha$ on the left corresponds to the endomorphism $F_{*}=q^n(F^{*})^{-1}$ on the rigid cohomology. Furthermore, $\alpha$ is bijective on each homology group of $K_{\cdot}(L^{\dagger}_{\delta},\underline{\hat{D}})$.
\end{Theorem}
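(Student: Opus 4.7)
The plan is to realize both sides as cohomology of twisted de~Rham--type complexes and bridge them via Dwork's splitting construction, following the strategy used by Baldassarri--Berthelot in the hypersurface case.

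\emph{Step 1: Rigid side as a twisted Koszul complex.} First, I would invoke Monsky--Washnitzer theory and the definition of rigid cohomology with coefficients in the pulled-back Dwork isocrystal to identify
$$
H^i_{\rig}(\mathbf{T}^n_k/K,\, f^*\mathcal{L}_{-\psi})
\;\cong\; H^i\bigl(\Omega^{\bullet}_{A^{\dagger}/K},\, \nabla\bigr),
$$
where $A^\dagger$ is the weak completion of $K[x_1^{\pm 1},\ldots,x_n^{\pm 1}]$ and $\nabla$ is the connection $\omega\mapsto d\omega - \gamma\, d\hat{f}\wedge\omega$ attached to $\mathcal{L}_{-\psi}$. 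Using the basis $\{dx_i/x_i\}$, this becomes a Koszul-type cohomology for the commuting operators $E_i - \gamma E_i\hat{f}$; by Koszul duality on $n$ variables, it matches the homology of a dual Koszul complex shifted to degree $n-i$, which is what is needed to match the left-hand side.

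\emph{Step 2: Replace $A^\dagger$ by $L_\delta^\dagger$.} I would then show that the inclusion $L_\delta^\dagger\hookrightarrow A^\dagger$ induces a quasi-isomorphism of Koszul complexes. The support condition $u\in\delta$ is dictated by the geometry of $\Delta(f)$, and the nondegeneracy hypothesis on $f$ kills the cohomology contributed by monomials outside the cone, mirroring the classical Adolphson--Sperber analysis; the polytope growth condition defining $L(b,c)$ interpolates precisely with the Monsky--Washnitzer dagger condition on $A^\dagger$.

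\emph{Step 3: Intertwine and identify Frobenius.} The key identity
$$
\hat{D}_i \;=\; \exp(-H(x))\circ E_i\circ \exp(H(x))
$$
formally transports the Koszul complex for $(\hat{D}_i)$ on $L_\delta^\dagger$ to one for $(E_i)$ on a twisted space. To reconcile this with the operator $E_i - \gamma E_i\hat{f}$ from Step~1, I would exploit the splitting identity $h(t)-h(t^p)=\log\theta(t)$ together with the commutation $\alpha\circ\hat{D}_i = q\hat{D}_i\circ\alpha$: up to Frobenius pullback, $H$ plays the role of $\gamma\hat{f}$, with Artin--Hasse corrections that are quasi-isomorphically trivial. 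Tracing $\alpha=\psi_q\circ F_0$ through these identifications, Koszul/Poincar\'e duality converts $F^*$ into $q^n(F^*)^{-1}$, matching $F_*$, with the factor $q^n$ arising from the $\Lambda^n$ twist under the duality. Bijectivity of $\alpha$ on each homology group then follows from the invertibility of Frobenius on rigid cohomology of the smooth variety $\mathbf{T}_k^n$.

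\emph{Main obstacle.} I expect the hardest step to be Step~2: proving that restricting from $A^\dagger$ to the Newton polytope-adapted space $L_\delta^\dagger$ preserves cohomology. This requires a careful filtration or spectral sequence argument exploiting the nondegeneracy of $f$ along each face $\sigma\subset\Delta(f)$, transferring the classical Adolphson--Sperber $p$-adic Banach-space techniques into the dagger/rigid setting in the spirit of Baldassarri--Berthelot. A secondary difficulty in Step~3 is verifying that the Frobenius-level identification between $H$ and $\gamma\hat{f}$ actually yields a quasi-isomorphism of complexes rather than only a formal match.
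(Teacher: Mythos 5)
Your overall skeleton---identify the rigid cohomology with a Monsky--Washnitzer twisted de Rham complex on the full Laurent dagger algebra, shrink to the cone-supported space $L_\delta^\dagger$, and conjugate by a Dwork exponential to pass from $\underline{D}$ to $\underline{\hat D}$---matches the paper's architecture, and your Step~3 correctly isolates the need to check that the conjugating factor is itself overconvergent (this is exactly Proposition~\ref{Compa-D}: one writes $R=\exp(H(x)-\pi\hat f(x))$ and uses $\ord(\gamma-\pi)\ge 2/(p-1)$ and the estimates on the $\gamma_l$ to show $R^{\pm1}\in L(b_0)$; note the paper conjugates to $\pi\hat f$ with $\pi^{p-1}=-p$, not $\gamma\hat f$). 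Step~1 is likewise the content of Proposition~\ref{Compa-Basic2}, quoted from Berthelot and Bourgeois.

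The genuine gap is Step~2, which is the heart of the theorem, and your proposed method for it would not work. You plan to show that $C^{\cdot}(L_\delta^\dagger,\underline{D})\hookrightarrow C^{\cdot}(L_0^\dagger,\underline{D})$ is a quasi-isomorphism by invoking the nondegeneracy of $f$ with respect to $\Delta(f)$ and Adolphson--Sperber-style filtration/Banach-space arguments. But Theorem~\ref{Thm1} carries \emph{no} nondegeneracy hypothesis: it is an unconditional comparison, and it is precisely this unconditionality that lets the paper later deduce Theorem~\ref{Thm2} (removing the $p\neq 2$ restriction of Bourgeois) by feeding the Adolphson--Sperber computation of $H_\cdot(K_\cdot(B,\underline{\hat D}))$ into the comparison, rather than the other way around. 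Using nondegeneracy to prove Step~2 would both weaken the statement and invert the logical structure; moreover the Adolphson--Sperber machinery computes the cohomology of $B$ or $L(b)$, not the effect of the inclusion into the full Laurent dagger algebra $L_0^\dagger$, so even under nondegeneracy you would only get equality of dimensions, not that the inclusion itself is a quasi-isomorphism. The paper's actual argument is geometric: it realizes $L_\delta^\dagger$ as $\Gamma(X_K^{\an},j^\dagger\mathcal{O})$ for the affine toric scheme $X=\Spec R[x^{\delta\cap\mathbf{Z}^n}]$ (Lemma~\ref{Compa-Basic}, which requires the nontrivial presentation of the semigroup algebra and degree-controlled syzygies in Lemmas~\ref{inf-finite} and~\ref{finite-contr}), passes to a regular refinement $\Sigma'$ of the fan to get a smooth toric compactification with normal crossings boundary, applies the Baldassarri--Berthelot comparison (their Corollary A.4) to identify $R\Gamma_{\rig}(\mathbf{T}^n_k,\mathcal{L}_{-\psi,f})$ with the dagger cohomology of a logarithmic twisted de Rham complex, and then computes the latter by a \v{C}ech spectral sequence whose exactness on dagger spaces (Lemma~\ref{dagger-ex}) is deduced from the vanishing $R^k\phi_*\mathcal{O}_{X'}=0$ for the proper toric morphism $X'\to X$. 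None of this apparatus appears in your proposal, and without it (or a substitute) Step~2 is unsupported.
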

As a corollary, we prove the following theorem, which eliminates the condition $p\neq 2$ in \cite{Bourgeois}. 
\begin{Theorem}\label{Thm2} ${}$
Suppose that $f:\mathbf{T}_k^n\rightarrow \mathbf{A}_k^1$ is nondegenerate with respect to $\Delta(f)$ and that $\dim\Delta(f)=n$. Then

(i) $H_{c,\rig}^i(\mathbf{T}_k^n,f^{*}\mathcal{L}_{\psi})=H_{\rig}^i(\mathbf{T}_k^n,f^{*}\mathcal{L}_{-\psi})=0$ if $i\neq n$.

(ii) $\dim H_{c,\rig}^n(\mathbf{T}_{k}^n,f^{*}\mathcal{L}_{\psi})=\dim H_{\rig}^n(\mathbf{T}_k^n,f^{*}\mathcal{L}_{-\psi})=n!\Vol(\Delta(f))$.
\end{Theorem}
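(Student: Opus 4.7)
The plan is to deduce Theorem \ref{Thm2} from Theorem \ref{Thm1} in three steps: compute the Koszul homology via the Adolphson-Sperber theorem, apply Theorem \ref{Thm1} to transfer this computation into rigid cohomology of $\mathcal{L}_{-\psi}$, and finally use Poincaré duality for rigid cohomology to pass to the compactly supported cohomology of $\mathcal{L}_{\psi}$.

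The first step imports the main result of \cite{Ado1989}: under the nondegeneracy assumption and $\dim\Delta(f)=n$, one has
\[
H_i(K_\cdot(L_\delta^\dagger,\underline{\hat{D}}))=0 \quad (i>0), \qquad \dim_K H_0(K_\cdot(L_\delta^\dagger,\underline{\hat{D}}))=n!\Vol(\Delta(f)).
\]
Adolphson and Sperber formulate this on a single Banach space $L(b)$ for small enough $b>0$; one needs to verify that the direct limit $L_\delta^\dagger=\bigcup_{b>0}L(b)$ preserves acyclicity and dimension, which is routine since the transition maps induce isomorphisms of the top homology groups, which are finite-dimensional over $K$.

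Plugging this into Theorem \ref{Thm1} yields the statements for $H^{*}_{\rig}(\mathbf{T}_k^n,f^{*}\mathcal{L}_{-\psi})$ in (i) and (ii) at once. For the compactly supported cohomology of $\mathcal{L}_{\psi}$, I would invoke Poincaré duality for rigid cohomology with overconvergent $F$-isocrystal coefficients on the smooth affine variety $\mathbf{T}_k^n$, which gives a perfect pairing
\[
H_{c,\rig}^i(\mathbf{T}_k^n,f^{*}\mathcal{L}_\psi)\otimes H_{\rig}^{2n-i}(\mathbf{T}_k^n,(f^{*}\mathcal{L}_\psi)^\vee)\longrightarrow K(-n).
\]
Since the Dwork isocrystal satisfies $\mathcal{L}_\psi^\vee\simeq\mathcal{L}_{-\psi}$, and hence $(f^{*}\mathcal{L}_\psi)^\vee\simeq f^{*}\mathcal{L}_{-\psi}$, the computation just obtained dualizes to give (i) and (ii) for the compactly supported cohomology.

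The main obstacle is the Koszul acyclicity of the first step. The heart of the computation carried out in \cite{Ado1989} runs through the weight filtration attached to $w$: on the associated graded, $\hat{D}_i = E_i + E_iH$ degenerates to multiplication by $\gamma x_i \partial\bar{f}/\partial x_i$, where $\bar{f}$ is the top-weight part of $f$, and the nondegeneracy hypothesis is exactly the condition that $x_1\partial\bar{f}/\partial x_1,\ldots,x_n\partial\bar{f}/\partial x_n$ form a regular sequence; the dimension count $n!\Vol(\Delta(f))$ emerges as the degree of the resulting zero-dimensional complete intersection, computed combinatorially on the Newton polytope. Once this analytic and combinatorial input is in place, the remaining steps are formal, and the improvement over \cite{Bourgeois} (eliminating $p\neq 2$) comes from the fact that Theorem \ref{Thm1} treats the splitting function $\theta(t)$ uniformly in $p$.
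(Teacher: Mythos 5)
Your overall route (Adolphson--Sperber input, then Theorem \ref{Thm1}, then Poincar\'e duality) matches the paper's, but your first step glosses over the one genuinely delicate point, and the way you dismiss it does not work. Adolphson--Sperber prove acyclicity and the dimension formula for the Koszul complex on the Banach space $B$ (a space squeezed between $L(\frac{p}{p-1})$ and $L(\frac{1}{p-1})$), not on $L_\delta^\dagger=\bigcup_{b>0}L(b)$. Since homology commutes with the filtered colimit as $b\to 0^+$, you would need the transition maps $H_i(K_\cdot(L(b'),\underline{\hat{D}}))\to H_i(K_\cdot(L(b),\underline{\hat{D}}))$, $b<b'$, to be isomorphisms, or at least to identify the colimit with the Banach computation; this is not routine, and it is not a consequence of finite-dimensionality, which for $H_i(K_\cdot(L_\delta^\dagger,\underline{\hat{D}}))$ is itself known only a posteriori via Theorem \ref{Thm1} and Berthelot's finiteness theorem. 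The paper's Proposition \ref{Dwork-surj} establishes only \emph{surjectivity} of $H_i(K_\cdot(B,\underline{\hat{D}}))\to H_i(K_\cdot(L_\delta^\dagger,\underline{\hat{D}}))$, and even that requires the bijectivity of $\alpha$ on the limit homology (from Theorem \ref{Thm1}) together with the convergence-improving property $\alpha(L(b))\subset L(\min\{\frac{p}{p-1},qb\})$. Surjectivity is enough for the vanishing in (i), but for (ii) it only gives $\dim H_0(K_\cdot(L_\delta^\dagger,\underline{\hat{D}}))\le n!\Vol(\Delta(f))$.

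For the equality in (ii) the paper never proves injectivity; instead it compares two expressions for the $L$-function $L(\mathbf{T}^n_k,f,t)$: the Adolphson--Sperber trace formula expresses it through $\det(1-t\alpha\mid H_0(K_\cdot(B,\underline{\hat{D}})))$, while the rigid-cohomological trace formula together with Poincar\'e duality expresses it through $\det(I-tq^n(F^*)^{-1}\mid H^n_{\rig}(\mathbf{T}^n_k/K,f^*\mathcal{L}_{-\psi}))$; once all other cohomology groups are known to vanish on both sides, equating degrees of these polynomials gives the dimension. You need to supply either this $L$-function argument or an actual proof that the map on $H_0$ is injective; as written, ``the transition maps induce isomorphisms'' is an unsupported assertion of precisely the missing statement. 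The remaining steps of your argument (applying Theorem \ref{Thm1}, the identification $(f^*\mathcal{L}_\psi)^\vee\simeq f^*\mathcal{L}_{-\psi}$, and Poincar\'e duality on the smooth affine $\mathbf{T}^n_k$) agree with the paper and are fine.
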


\section{Comparison Theorem}
 By \cite[Lemma 4.1]{dwork1}, there exists an element $\pi\in \mathbf{Q}_p(\zeta_p)$ such that $\pi^{p-1}+p=0$ and
$$\zeta_p\equiv 1+\pi\mod\pi^2.$$ Hence $\ord(\pi-\gamma)\geq \frac{2}{p-1}$ as $\zeta_p-1\equiv\pi\equiv\gamma\mod(\zeta_p-1)^2$.
By \cite[Lemma 4.1]{dwork1} and the definition of the weight function, we have $$G(x):=\exp(\pi(\hat{f}(x)-\hat{f}(x^q)))\in  L\Big(\frac{p-1}{pq},0\Big).$$ For $i=1,\cdots,n$, define operators
$$
D_i=\exp(-\pi\hat{f}(x))\circ E_i \circ \exp(\pi \hat{f}(x)).
$$ 
We have the Koszul complex $K_{\cdot}(L_{\delta}^{\dagger},\underline{D})$ and its endomorphism induced by $\alpha_1=\exp(-\pi\hat{f}(x))\circ \psi_q\circ \exp(\pi\hat{f}(x))$. Let $R=\exp(H(x)-\pi\hat{f}(x))$. We have 
\begin{equation}\label{commute-R}
  \alpha_1=R\circ\alpha \circ R^{-1},~D_i=R\circ \hat{D}_i \circ R^{-1}
\end{equation} for all $i$.
\begin{Proposition}\label{Compa-D}
(i) $R,R^{-1}\in L(b_0)$ where $b_0=\min\left\{\frac{1}{p-1},\frac{p-1}{p}\right\}$.

(ii) The multiplication by $R$ defines an isomorphism of Koszul complexes $\beta:K_{\cdot}(L_{\delta}^{\dagger},\underline{\hat{D}})\xrightarrow{\cdot R}  K_{\cdot}(L_{\delta}^{\dagger},\underline{D})$ and $\alpha_1\circ \beta=\beta\circ\alpha$.
\end{Proposition}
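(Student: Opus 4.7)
The plan is to first establish the norm estimate in (i) by a direct coefficient-by-coefficient expansion, and then deduce (ii) formally from the intertwining identity \eqref{commute-R}.

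For (i), I would expand
\begin{equation*}
H(x) - \pi\hat{f}(x) = \sum_{j=1}^N (\gamma-\pi)\hat{a}_j x^{w_j} \;+\; \sum_{j=1}^N \sum_{l\geq 1} \gamma_l\,\hat{a}_j^{p^l} x^{p^l w_j},
\end{equation*}
separating the linear-in-$t$ part of $h(t)=\sum_l\gamma_l t^{p^l}$, which absorbs $-\pi\hat{f}$, from the tail $l\geq 1$. The linear piece is controlled by the estimate $\ord(\pi-\gamma)\geq 2/(p-1)$ recorded at the start of the section, together with the bound $w(w_j)\leq 1$ coming from $w_j\in\Delta(f)$. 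For the tail, the crucial input is the sharp estimate $\ord\gamma_l \geq p^{l+1}/(p-1)-(l+1)$, which follows from the defining property $\sum_{i\geq 0}\gamma^{p^i}/p^i=0$ by rewriting $\gamma_l = -\sum_{i>l}\gamma^{p^i}/p^i$. Combined with $w(p^l w_j)=p^l w(w_j)\leq p^l$, a brief case-check distinguishing the two branches of $b_0=\min\{1/(p-1),(p-1)/p\}$ (the first branch being active for $p\geq 3$ and the two coinciding at $1/2$ when $p=2$) shows that every nonzero coefficient of $H-\pi\hat{f}$ exceeds $b_0$ times the corresponding weight by a uniformly positive amount, and in fact by at least $1/(p-1)$. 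Hence $H-\pi\hat{f}\in L(b_0,c)$ for some $c\geq 1/(p-1)$, which is precisely the threshold needed for the termwise exponential series to converge in $L(b_0,0)$; this yields $R\in L(b_0)$. Running the same argument on $-(H-\pi\hat{f})$ gives $R^{-1}\in L(b_0)$.

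For (ii), once $R,R^{-1}\in L(b_0)\subset L_\delta^\dagger$ are in hand, multiplication by $R$ preserves $L_\delta^\dagger$ because $L_\delta^\dagger$ is a ring, a consequence of the subadditivity $w(u+v)\leq w(u)+w(v)$ together with $\min(b,b')>0$. Thus $\beta$ is an isomorphism of the underlying graded $K$-modules of the two Koszul complexes, with inverse given by multiplication by $R^{-1}$. Compatibility with the Koszul differentials is then immediate from \eqref{commute-R}:
\begin{equation*}
D_i\circ\beta(g) = R\hat{D}_i R^{-1}(Rg) = R\hat{D}_i(g) = \beta\circ\hat{D}_i(g).
\end{equation*}
The identity $\alpha_1\circ\beta = \beta\circ\alpha$ follows in exactly the same way from $\alpha_1=R\alpha R^{-1}$.

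The main obstacle in this plan lies entirely in (i), and more precisely in justifying the sharp lower bound on $\ord\gamma_l$: the naive bound $\ord\gamma_l\geq 1/(p-1)$ that one reads off the definition term by term cannot dominate the weight $b_0 p^l w(w_j)$ as $l\to\infty$. Replacing $\gamma_l$ by its tail expansion, together with the improved estimate $\ord(\pi-\gamma)\geq 2/(p-1)$ on the linear piece, produces just enough slack to place $H-\pi\hat{f}$ in an $L(b_0,c)$ with $c\geq 1/(p-1)$, and it is this precise margin that makes the exponential converge. With this estimate secured, part (ii) is a purely formal consequence of \eqref{commute-R}.
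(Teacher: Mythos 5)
Your argument is correct and follows essentially the same route as the paper: the paper likewise splits $H-\pi\hat f$ into the linear piece controlled by $\ord(\gamma-\pi)\geq \frac{2}{p-1}$ and the tail controlled by $\ord(\gamma_l)\geq \frac{p^{l+1}}{p-1}-l-1$, the only difference being that it estimates the factors $\exp((\gamma-\pi)\hat a_jx^{w_j})$ and $\exp(\gamma_l t^{p^l})$ coefficient by coefficient (placing them in $L(\frac{1}{p-1},0)$ and $L(\frac{p-1}{p},0)$ respectively and multiplying) rather than invoking the general principle that $\exp$ carries $L(b_0,c)$ with $c\geq \frac{1}{p-1}$ into $L(b_0,0)$. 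One cosmetic slip: for $p=2$ the two quantities $\frac{1}{p-1}=1$ and $\frac{p-1}{p}=\frac{1}{2}$ do not coincide (the second branch is the active one there), but this does not affect your case-check, which goes through on both branches.
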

\begin{proof}
(ii) follows from (i) and (\ref{commute-R}). Let $h_1(t)=\sum_{l=1}^{+\infty}\gamma_lt^{p^l}$. Then
$$
R=\exp((\gamma-\pi)\hat{f}(x))\prod_{j=1}^N\exp(h_1(\hat{a}_jx^{w_j})).
$$The estimate $\ord(\gamma-\pi)\geq \frac{2}{p-1}$ implies that 
$$
\exp((\gamma-\pi)\hat{f}(x))=\prod_{j=1}^N\exp((\gamma-\pi)\hat{a}_jx^{w_j})\in L\Big(\frac{1}{p-1},0\Big).
$$
 Note that $\exp(\gamma_lt^{p^l})=\sum_{k=0}^{+\infty}\frac{\gamma_l^{k}}{k!}t^{kp^l}$ and 
 $$
 \ord(\gamma_l)=\ord\Big(-\sum_{i=l+1}^{+\infty}\gamma^{p^i}/p^i\Big)\geq \frac{p^{l+1}}{p-1}-l-1.
 $$ Then
 $$
 \ord\Big(\frac{\gamma_l^k}{k!}\Big)\geq k\Big(\frac{p^{l+1}}{p-1}-l-1-\frac{1}{p-1}\Big)\geq \frac{p-1}{p}\cdot kp^l
 $$ for each $l\geq 1$. 
As the term $c_it^i$ in the expansion for $\exp(h_1(t))=\prod_{l=1}^{+\infty}\exp(\gamma_lt^{p^l})$ only depends on finitely many $\exp(\gamma_lt^{p^l})$, we have 
$$\ord(c_i)\geq \frac{p-1}{p},~\textrm{and}~
\prod_{j=1}^N\exp(h_1(\hat{a}_jx^{w_j}))\in L\Big(\frac{p-1}{p},0\Big).
$$
Hence $R\in L\Big(\frac{1}{p-1}\Big)L\Big(\frac{p-1}{p}\Big)\subset L(b_0)$. The same is true for $R^{-1}$.
\end{proof}
  Consider the twisted de Rham complex $C^\cdot(L_{\delta}^{\dagger},\underline{D})$ defined by
  \begin{displaymath}
  C^k(L_{\delta}^{\dagger},\underline{D})=\bigoplus_{1\leq i_1<\cdots<i_k\leq n}L_{\delta}^{\dagger}\frac{dx_{i_1}}{x_{i_1}}\wedge\cdots\wedge \frac{dx_{i_k}}{x_{i_k}}\cong (L_{\delta}^{\dagger})^{\binom{n}{k}}
  \end{displaymath}
  with the differential $d:C^k(L_{\delta}^{\dagger},\underline{D})\rightarrow C^{k+1}(L_{\delta}^{\dagger},\underline{D})$ given by 
  $$
  d(\omega)=\exp(-\pi\hat{f})\circ d_x\circ \exp(\pi\hat{f})(\omega)
  $$
  for any $
  \omega\in C^k(L_{\delta}^{\dagger},\underline{D})
  $.
  Define an endomorphism $\alpha^{(k)}_1$ of $C^k(L_{\delta}^{\dagger},\underline{D})$ by
  $$
  \alpha^{(k)}_1=\bigoplus_{1\leq i_1<\cdots<i_k\leq n}q^{n-k}\alpha_1.
  $$
  The equalities $\alpha_1\circ D_i=qD_i\circ\alpha_1$ imply that $\alpha_1$ induces a chain map on $C^{\cdot}(L_{\delta}^{\dagger},\underline{D})$. For each $0\leq k \leq n$, we have a natural isomorphism
  \begin{equation}\label{Commu-Dwork1}
  H^{k}(C^\cdot(L_{\delta}^{\dagger},\underline{D}))\simeq H_{n-k}(K_{\cdot}(L_{\delta}^{\dagger},\underline{D}))
  \end{equation} 
  which is compatible with their endomorphisms $\alpha_1$.

Let $\Sigma$ be the fan of all faces of the dual cone of $\delta$. Let $X$ be the toric scheme over $R=\mathcal{O}_K$ associated to $\Sigma$. By the definition of $\Sigma$, we have $X=\Spec(A)$ with $A=R[x^{\delta\cap \mathbf{Z}^n}]$. Set 
$$
\mathcal{S}=\left\{\sum_{j=1}^{N}r_jw_j:\;0\leq r_1,\cdots,r_N\leq 1,\;\sum_{j=1}^{N}r_jw_j\in\mathbf{Z}^{n}\right\}.
$$ 
Since $\mathcal{S}$ is discrete and bounded, it is finite. Set $\mathcal{S}=\left\{s_1,\cdots,s_L\right\}$. Then $A$ is generated by $\left\{x^{s_i}|s_i\in \mathcal{S}\right\}$ as an $R$-algebra. More precisely, we have
$ A\simeq R[y_1,y_2,\cdots,y_L]/I$ by \cite[Proposition 1.1.9]{cox}, where
$$
I=\left\{\sum_{a,b}\Big(y^{a_1}_1\cdots y^{a_L}_L-y^{b_1}_1\cdots y^{b_L}_L\Big):\;\sum_{i=1}^La_i s_i=\sum_{i=1}^Lb_i s_i,\;a_i,\;b_i\in \mathbf{Z}_{\geq 0}\right\}.
$$

Consider the canonical immersions $X\rightarrow \mathbf{A}_R^L\rightarrow \mathbf{P}_R^L$. Let $\overline{X}$ be the closure of $X$ in $\mathbf{P}_R^L$. Let $\widehat{\overline{X}}$ be the formal completion of $\overline{X}$ with respect to the $p$-adic topology. Denote by $X_k,X_K$ the special fiber and the generic fiber of $X$ over $R$, respectively. Denote by $X^{\an}_K$ the analytic space associated to $X_K$. By \cite[1.2.4 (ii)]{berthelot1996cohomologie}, $X^{\an}_K$ is a strict neighborhood of $]X_k[_{\widehat{\overline{X}}}$ in $]\overline{X}_k[_{\widehat{\overline{X}}}$. Let $V$ be a strict neighborhood of $]X_k[_{\widehat{\overline{X}}}$ in $]\overline{X}_k[_{\widehat{\overline{X}}}$ and $E$ a sheaf of $\mathcal{O}_V$-modules. Define 
$$
j^{\dagger}E:=\varinjlim_{V'}j_{V'*}j^{*}_{V'}E,
$$ where the inclusion $j_{V'}:V'\rightarrow V$ runs through the strict neighborhoods of $]X_k[_{\widehat{\overline{X}}}$ contained in $V$.
\begin{Lemma}\label{Compa-Basic}
We have an isomorphism
$
L_{\delta}^{\dagger}\simeq \Gamma(X^{\an}_K,j^{\dagger}\mathcal{O}_{X^{\an}_K})
$
and $H^{i}(X^{\an}_K,j^{\dagger}(\mathcal{O}_{X^{\an}_K}))=0$ for all $i\geq 1$. 
\end{Lemma}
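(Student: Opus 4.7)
The plan is to exhibit an explicit cofinal family of affinoid strict neighborhoods of $]X_k[_{\widehat{\overline{X}}}$ inside $X^{\an}_K$, compute their rings of sections, and then take the direct limit. First I would use the closed embedding $X\hookrightarrow \mathbf{A}^L_R\subset \mathbf{P}^L_R$ defined by $y_i=x^{s_i}$ for $s_i\in \mathcal{S}$. In the affine chart at infinity, the tube $]X_k[_{\widehat{\overline{X}}}$ is the locus in $X_K^{\an}$ cut out by $|y_i|\le 1$ for $i=1,\dots,L$. For each $b>0$, define the affinoid
\[
V_b=\bigl\{P\in X_K^{\an}\colon |x^{w_j}(P)|\le p^{b}\text{ for all }j=1,\dots,N\bigr\},
\]
(equivalently a condition on the $y_i$ via the relation $s_i=\sum r_j w_j$). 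A straightforward check, using the definition of $w(u)$ as the infimum over $u\in c\Delta(f)$, shows that on $V_b$ one has $|x^u|\le p^{bw(u)}$ for every $u\in\delta\cap\mathbf{Z}^n$. As $b\to 0^+$, the $V_b$ form a cofinal system of strict neighborhoods of $]X_k[_{\widehat{\overline{X}}}$ contained in $X_K^{\an}$.

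Next I would identify $\Gamma(V_b,\mathcal{O}_{V_b})$ explicitly. An analytic function on $V_b$ admits a convergent Laurent expansion $\sum_{u\in\delta\cap\mathbf{Z}^n}a_u x^u$ with $a_u\in K$ and $\ord(a_u)+bw(u)\to +\infty$; this is the Banach completion of $\bigcup_{c\in\mathbf{R}}L(b,c)$ under the natural Gauss-type norm. Passing to the direct limit over $b\searrow 0$ and using that every element of some $L(b')$ extends analytically to $V_b$ for any $b<b'$ (with room to spare on $w(u)$), one obtains
\[
\Gamma\bigl(X_K^{\an},j^{\dagger}\mathcal{O}_{X_K^{\an}}\bigr)=\varinjlim_{b\to 0^+}\Gamma(V_b,\mathcal{O}_{V_b})=\bigcup_{b>0}L(b)=L_{\delta}^{\dagger},
\]
which is the first assertion.

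For the cohomology vanishing, I would argue as follows. By Tate's acyclicity theorem each affinoid $V_b$ satisfies $H^i(V_b,\mathcal{O}_{V_b})=0$ for $i\ge 1$. Since the $V_b$ are cofinal strict neighborhoods,
\[
j^{\dagger}\mathcal{O}_{X_K^{\an}}=\varinjlim_{b\to 0^+} j_{V_b,*}j_{V_b}^{*}\mathcal{O}_{X_K^{\an}},
\]
and by \cite[2.1.3]{berthelot1996cohomologie} one has $H^i(X_K^{\an},j^{\dagger}\mathcal{O})\simeq \varinjlim_b H^i(V_b,\mathcal{O}_{V_b})$ (for a paracompact $X_K^{\an}$ the filtered colimit commutes with sheaf cohomology, and may equivalently be computed via \v{C}ech cohomology with respect to a cofinal family of finite affinoid covers). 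Hence the higher cohomology vanishes.

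The main obstacle is step one: identifying a cofinal system of \emph{affinoid} strict neighborhoods whose function rings are genuinely $L(b,\cdot)$-completions, rather than some coarser spaces defined only by the embedding coordinates $y_i$. This requires tying the convergence condition on $V_b$ back to the weight function $w$ on all of $\delta\cap\mathbf{Z}^n$, which is where the specific shape of the generating set $\mathcal{S}$ (with $0\le r_i\le 1$) and the definition of $w$ via $\Delta(f)$ have to be combined carefully.
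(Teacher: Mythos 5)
Your overall strategy (a cofinal system of affinoid strict neighborhoods, explicit computation of their rings of sections, passage to the colimit, and vanishing of higher cohomology via Tate acyclicity together with commutation of filtered colimits with cohomology) is the same as the paper's, and the cohomological half of your argument is essentially the paper's. The gap is exactly where you flag it yourself: the identification of $\Gamma(V_b,\mathcal{O}_{V_b})$ with the completion of $\bigcup_{c}L(b,c)$ is asserted (``admits a convergent Laurent expansion'') rather than proved, and this identification \emph{is} the content of the lemma. The affinoid in question is presented as a quotient of a polydisc algebra in the embedding variables $y_1,\dots,y_L$ by the toric ideal $I$, so what must be shown is that the quotient $W_L/IW_L$ of the overconvergent ring is exactly $L_{\delta}^{\dagger}$. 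This has two nontrivial halves. Surjectivity requires that every weighted Laurent series $\sum a_ux^u$ with $\ord(a_u)\geq bw(u)+c$ lifts to an overconvergent series in the $y_i$, which uses the comparability of $w(u)$ with the minimal length $|v|$ of a representation $u=\sum v_is_i$, i.e.\ \cite[Lemma 1.9(c)]{Ado1989}. Injectivity --- equivalently, existence and uniqueness of your ``Laurent expansion'' on the possibly singular affinoid $V_b$ --- requires showing that an overconvergent series in the $y_i$ whose coefficients cancel along each fiber $S_u$ actually lies in $IW_L$. This is an \emph{effective} statement: one needs generators $h_1,\dots,h_m$ of $I$ such that every binomial $y^a-y^b\in I$ can be written as $\sum g_ih_i$ with $\deg g_i\leq\max\{|a|,|b|\}$, so that the rewriting preserves overconvergence. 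The paper proves this by a Dickson-lemma argument (Lemmas \ref{inf-finite} and \ref{finite-contr}); your proposal supplies no substitute, so the central step remains unproved.

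The parts of your proposal that are correct are the easy ones: the cofinality of your $V_b$ against Berthelot's standard system $B^{L}(0,\lambda)\cap X_K^{\an}$ does hold (each $s_i\in\mathcal{S}$ is a $[0,1]$-combination of the $w_j$, and each $w_j$ belongs to $\mathcal{S}$), and the estimate $|x^u|\leq p^{bw(u)}$ on $V_b$ follows on the dense torus from writing $u=\sum t_jw_j$ with $\sum t_j=w(u)$ and extends to the boundary by continuity. Likewise the one-directional inclusion $L(b')\subset\Gamma(V_b,\mathcal{O}_{V_b})$ for $b'>b$ is a routine convergence check. But none of this addresses why there are no \emph{other} analytic functions on $V_b$, nor why the expansion is unique, which is where the real work lies.
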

\begin{proof}
By \cite[Exemples 1.2.4(iii)]{berthelot1996cohomologie}, $V_\lambda:=B^{L}(0,\lambda)\cap X_K^{\an}$ form a cofinal system of strict neighborhoods of $]X_k[_{\widehat{\overline{X}}}$ in $X_K^{\an}$ when $\lambda\rightarrow{1^{+}}$, where $B^L(0,\lambda)$ is the closed polydisc with radius $\lambda$ of dimension $L$ in $\mathbf{A}^{L,\an}_K$. 
Let $j_{\lambda}$ be the inclusion $V_{\lambda}\rightarrow X_{K}^{\an}$ and fix a $\lambda_0$. By \cite[1.2.5]{berthelot1997finitude}, we have 
$$
j^{\dagger}\mathcal{O}_{X_{K}^{\an}}
\simeq j_{\lambda_0*}j^{\dagger}\mathcal{O}_{V_{\lambda_0}}\simeq Rj_{\lambda_0*}j^{\dagger}\mathcal{O}_{V_{\lambda_0}}.
$$ Note that the inclusion $V_\lambda\rightarrow V_{\lambda_0}$ is affine and $V_{\lambda}$ is affinoid for each $\lambda\leq \lambda_0$. We have 
$$H^i(X_{K}^{\an},j^{\dagger}\mathcal{O}_{X_{K}^{\an}})\simeq H^{i}(V_{\lambda_0},j^{\dagger}\mathcal{O}_{V_{\lambda_0}})\simeq \varinjlim_{\lambda<\lambda_0}H^{i}(V_\lambda,\mathcal{O}_{V_{\lambda}}).$$ Hence the second assertion holds.
Set
\begin{displaymath}
W_L=\bigcup_{\lambda>1}\left\{\sum_{v\in \mathbf{Z}_{\geq 0}^L}a_vy^v:\;a_v\in K,\;|a_v|\lambda^{|v|}\rightarrow 0~\as~|v|\rightarrow +\infty\right\}.
\end{displaymath}
We have
$
\Gamma(X_{K}^{\an},j^{\dagger}\mathcal{O}_{X_{K}^{\an}})=\varinjlim_{\lambda>1}\Gamma(V_\lambda,\mathcal{O}_{V_{\lambda}})\cong W_L/IW_L.
$
For any $u\in \mathbf{Z}^n\cap \delta$, denote the set of solutions $v_1s_1+\cdots+v_Ls_L=u$ in $\mathbf{Z}^L_{\geq0}$ by $S_u$. Define a morphism $\chi:W_L\rightarrow L^{\dagger}_{\delta}$ by 
$$
\chi\Big(\sum_{v\in\mathbf{Z}_{\geq0}^{L}}a_vy^{v}\Big)=\sum_{u\in\delta\cap \mathbf{Z}^n}\Big(\sum_{v\in S_u}a_v\Big)x^{u}.$$ 
Since the map 
$
v\mapsto u=v_1s_1+\cdots+v_Ls_L
$ is a linear continuous map, there exists some $B>0$ such that $|u|\leq B|v|$. We have 
$$
\Big|\sum_{v\in S_u}a_v\Big|\lambda^{B^{-1}|u|}\leq \max_{v\in S_u}|a_v|\lambda^{|v|}
$$
for any $\lambda>1$. Thus $\chi$ is well defined and it induces a morphism 
$
\bar{\chi}:W_L/IW_L\rightarrow L_{\delta}^{\dagger}.
$
It suffices to show $\bar{\chi}$ is an isomorphism. 

Take
$
g(x)=\sum_{u\in\delta\cap \mathbf{Z}^n}b_ux^u\in L(b,c)
$ for some $b>0,c\in \mathbf{R}$.
By the definition of $\mathcal{S}$, for any nonzero element $u\in\delta\cap\mathbf{Z}^n$, we can find a subset $s(u)\subset \left\{1,\cdots,L\right\}$ such that 
$$ 
u=\sum_{i\in s(u)}v_{i}s_{i},\quad
 v_i\in \mathbf{Z}_{\geq 1}
$$ and $w(s_{i})^{-1}s_{i},w(u)^{-1}u$ lie in the same codimension 1 face of $\Delta(f)$ for any $i\in s(u)$. By \cite[Lemma 1.9 (c)]{Ado1989} and the choice of $M$, we have 
 $$w(u)=\sum_{i\in s(u)}v_{i}w(s_{i})\geq \frac{\sum_{i\in s(u)}|v_i|}{M}.$$
Set $y^{v(u)}=\prod_{i\in s(u)}y^{v_{i}}_{i}$ and
$
h(y)=\sum_{u}b_uy^{v(u)}.
$
We have $$\ord(b_u)\geq bw(u)+c\geq \frac{b}{M}|v(u)|+c.$$ 
So $h(y)\in W_L$ and $\chi(h(y))=g(x)$. Hence $\bar{\chi}$ is surjective.

Take $f=\sum_{v}a_vy^v\in W_L$ such that $\ord(a_v)\geq b|v|+c$ for some $b,c\in \mathbf{R}$ and $b>0$.
If $\chi(f)=0$, we have
$$\sum_{v\in S_u}a_v=0$$ for any $u$. For each $u\in \delta\cap\mathbf{Z}^n$, take some $v_u \in S_u$ such that $|v_u|=\min\left\{|v||v\in S_u\right\}$. By Lemma \ref{finite-contr} below, we can find $h_1,\cdots,h_m\in I$ such that for any $v\in S_u$, there exist $g_{vu1},\cdots, g_{vun}\in R[y_1,\cdots,y_L]$ such that
$$
y^v-y^{v_u}=\sum_{j=1}^mg_{vuj}h_j
$$ and $\deg(g_{vuj})\leq |v|$ for all $j$. So 
$$f=\sum_{u}\sum_{v\in S_u}a_vy^v=\sum_{u}\sum_{v\in S_u}a_v(y^v-y^{v_u})=\sum_{j=1}^mh_j\Big(\sum_{u}\sum_{v\in S_u}a_vg_{vuj}\Big).$$
Set $g_{vuj}=\sum_{k}g^k_{vuj}y^k$ with $|k|\leq |v|$ and $\ord(g^k_{vuj})\geq 0$. Define
$$
g_j:=\sum_{u}\sum_{v\in S_u}a_vg_{vuj}=\sum_{k}y^k\Big(\sum_{u}\sum_{v\in S_u}a_vg^k_{vuj}\Big).
$$ We have 
$$
\ord\Big(\sum_{u}\sum_{v\in S_u}a_vg^k_{vuj}\Big)\geq \min_{v\in S_u,u}\ord(a_vg^k_{vuj})\geq \min_{v\in S_u,u}\ord(a_v)\geq \min_{v\in S_u,u}b|v|+c\geq b|k|+c.
$$ Hence $g_j\in W_L$ and $f\in IW_L$ which imply that $\bar{\chi}$ is injective.
\end{proof}
\begin{Lemma}\label{inf-finite}
  For any subset $J$ of $\mathbf{Z}_{\geq 0}^n$, there exists a finite subset $J_0$ of $J$ such that $J\subset \bigcup_{v\in J_0}(v+\mathbf{Z}_{\geq 0}^n)$.
\end{Lemma}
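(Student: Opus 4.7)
The plan is to prove this classical Dickson's lemma by induction on $n$. The base case $n=1$ is trivial: if $J \subset \mathbf{Z}_{\geq 0}$ is nonempty, take $J_0 = \{\min J\}$, and then every element of $J$ lies in $\min J + \mathbf{Z}_{\geq 0}$.

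For the inductive step, I would assume the statement in dimension $n-1$ and let $J \subset \mathbf{Z}_{\geq 0}^n$ be nonempty. Pick some $v = (v_1, \ldots, v_n) \in J$. The idea is to split $J$ into the part dominated by $v$ and finitely many ``slices'' in which one coordinate is pinned to a value less than the corresponding coordinate of $v$. Precisely, for each pair $(i,j)$ with $1 \leq i \leq n$ and $0 \leq j < v_i$, define $J_{i,j} = \{u \in J : u_i = j\}$. Via the projection that drops the $i$-th coordinate, $J_{i,j}$ corresponds to a subset of $\mathbf{Z}_{\geq 0}^{n-1}$, so the induction hypothesis produces a finite subset $J_{0,i,j} \subset J_{i,j}$ whose translates cover $J_{i,j}$; because every element involved shares the same $i$-th coordinate $j$, these lift to genuine translates inside $\mathbf{Z}_{\geq 0}^n$.

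I would then set $J_0 = \{v\} \cup \bigcup_{i,j} J_{0,i,j}$, a finite union of finite sets, and verify it covers $J$ by taking an arbitrary $u \in J$: either $u \geq v$ componentwise and $u \in v + \mathbf{Z}_{\geq 0}^n$, or some coordinate satisfies $u_i < v_i$, in which case $u \in J_{i,u_i}$ is dominated by an element of $J_{0,i,u_i}$. There is no serious obstacle here — the argument is a routine induction, and the only moment of care is the lift of $(n-1)$-dimensional dominance to the $n$-dimensional setting, which holds because the agreement in the $i$-th coordinate forces the difference to have a zero in that slot.
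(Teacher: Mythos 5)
Your proof is correct and is essentially the same induction-on-$n$ argument the paper uses: both reduce the part of $J$ not dominated by finitely many chosen elements of $J$ to finitely many coordinate slices $\{u \in J : u_i = j\}$, each of which is handled by the induction hypothesis and then lifted back to $\mathbf{Z}_{\geq 0}^n$ because the pinned coordinate contributes $0$ to the difference. The only difference is bookkeeping: you fix a single $v\in J$ and slice every coordinate $i$ at each value $j<v_i$, whereas the paper first applies the induction hypothesis to the projection of $J$ onto the last $n$ coordinates to produce elements $v_1,\dots,v_r\in J$ and then slices only the first coordinate at values below $\max_i a_i$; your variant is marginally more self-contained since it avoids the projection step.
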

\begin{proof}
      We use induction on $n$. When $n=1$, it is trivial. Suppose that our assertion holds for any subset of $\mathbf{Z}_{\geq 0}^n$. Let $J$ be a subset of $\mathbf{Z}_{\geq 0}^{n+1}$. Take 
      $$
      J'=\left\{v\in \mathbf{Z}_{\geq 0}^{n}:\;(a,v)\in J~\textrm{for some $a\in \mathbf{Z}_{\geq 0}$}\right\}.
      $$By the induction hypothesis, there exists a finite subset $J_0'=\left\{v'_1,\cdots,v'_r\right\}$ such that 
      $$
      J'\subset \bigcup_{i=1}^r\Big(v'_i+\mathbf{Z}_{\geq 0}^{n}\Big).
      $$
      For each $i$, take some $a_i\in \mathbf{Z}_{\geq 0}$ such that $v_i=(a_i,v'_i)\in J$. For any $b\in \mathbf{Z}_{\geq 0}$, let 
      $$
      J_b=\left\{(c_1,\cdots,c_{n+1})\in J:\;c_1=b\right\}.
      $$Using the induction hypothesis again, there exists a finite subset $J_{b,0}$ such that 
      $$
      J_b\subset \bigcup_{v\in J_{b,0}}\Big(v+\mathbf{Z}_{\geq 0}^{n+1}\Big).
      $$Set $a_0=\max_{1\leq i\leq r}\left\{a_i\right\}$. We have 
      \begin{eqnarray*}
      J&\subset& \Big(\bigcup_{b< a_0}J_b\Big)\bigcup \Big(\bigcup_{i=1}^r(v_i+\mathbf{Z}_{\geq 0}^{n+1})\Big)\\
      &\subset &\Big(\bigcup_{b< a_0}\bigcup_{v\in J_{b,0}}(v+\mathbf{Z}_{\geq 0}^{n+1})\Big)\bigcup \Big(\bigcup_{i=1}^r(v_i+\mathbf{Z}_{\geq 0}^{n+1})\Big)
      \end{eqnarray*}
      We can take $J_0=\Big(\bigcup_{b<a_0}J_{b,0}\Big)\bigcup \left\{v_1,\cdots,v_r\right\}$.
\end{proof}
\begin{Lemma}\label{finite-contr}
   There are finitely many elements $h_1,\cdots,h_m\in I$ such that for any $y^a-y^b\in I$, there exist $g_1,\cdots,g_m\in R[y_1,\cdots,y_L]$ such that 
  $$
y^a-y^b=\sum_{i=1}^mg_ih_i
$$ and $\deg(g_i)\leq \max\left\{|a|,|b|\right\}$.
\end{Lemma}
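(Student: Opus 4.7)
The plan is to use Lemma \ref{inf-finite} (essentially Dickson's lemma) to extract finitely many binomial relations in $I$, and then prove the degree bound by strong induction on $|a|+|b|$, splitting into two mirrored decompositions to handle the symmetric nature of the bound $\max(|a|,|b|)$. Let $\phi : \mathbf{Z}_{\geq 0}^L \to \mathbf{Z}^n$ be the map $\phi(v) = v_1 s_1 + \cdots + v_L s_L$ and consider
$$
\mathcal{J} := \{(a, b) \in \mathbf{Z}_{\geq 0}^L \times \mathbf{Z}_{\geq 0}^L :\; \phi(a) = \phi(b),\; a \neq b\} \subset \mathbf{Z}_{\geq 0}^{2L}.
$$
Applying Lemma \ref{inf-finite} to $\mathcal{J}$ yields finitely many elements $(a^{(1)}, b^{(1)}), \ldots, (a^{(m)}, b^{(m)}) \in \mathcal{J}$ such that every $(a, b) \in \mathcal{J}$ satisfies $a \geq a^{(i)}$ and $b \geq b^{(i)}$ (componentwise) for some $i$. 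I set $h_i := y^{a^{(i)}} - y^{b^{(i)}} \in I$.

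Next, I would prove the statement by strong induction on $|a| + |b|$, for all $y^a - y^b \in I$ (i.e.\ all $(a,b)$ with $\phi(a) = \phi(b)$). The case $a = b$ is trivial. Otherwise $(a, b) \in \mathcal{J}$, and I pick some $(a^{(i)}, b^{(i)})$ dominated by $(a, b)$, writing $a = a^{(i)} + c$ and $b = b^{(i)} + d$ with $c, d \in \mathbf{Z}_{\geq 0}^L$. Since $\phi(a^{(i)}) = \phi(b^{(i)})$ and $\phi(a) = \phi(b)$, we also have $\phi(c) = \phi(d)$. The essential algebraic identities, obtained by adding and subtracting $y^{a^{(i)}+d}$ or $y^{b^{(i)}+c}$, are
\begin{align*}
y^a - y^b &= y^{a^{(i)}}(y^c - y^d) + y^d \, h_i \\
&= y^c \, h_i + y^{b^{(i)}}(y^c - y^d).
\end{align*}
Because $(a^{(i)}, b^{(i)}) \in \mathcal{J}$ is nonzero, $|c| + |d| < |a| + |b|$, so the inductive hypothesis applies to $y^c - y^d$ (either $c = d$, in which case it vanishes, or $(c,d) \in \mathcal{J}$), producing $g'_j \in R[y_1, \ldots, y_L]$ with $\deg g'_j \leq \max(|c|, |d|)$ and $y^c - y^d = \sum_j g'_j h_j$.

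To close the induction I would choose the decomposition according to which of $|c|$, $|d|$ is larger. If $|c| \geq |d|$, I use the first identity: the cofactor of each $h_j$ contributed by $y^{a^{(i)}} g'_j$ has degree at most $|a^{(i)}| + |c| = |a|$, and the extra term $y^d$ in front of $h_i$ has degree $|d| \leq |b|$, so every cofactor is bounded by $\max(|a|, |b|)$. If $|d| \geq |c|$, the symmetric second identity gives the same bound by the mirror calculation $|b^{(i)}| + |d| = |b|$ and $|c| \leq |a|$. The main obstacle, and the reason for maintaining two decompositions, is precisely this asymmetry: committing to a single fixed decomposition would only yield a bound like $|a| + |b^{(i)}| - |a^{(i)}|$, which is not controlled by $\max(|a|,|b|)$ in general. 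Once the case split on $|c|$ versus $|d|$ is in place, everything reduces to Dickson's lemma together with the elementary binomial identities above.
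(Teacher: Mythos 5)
Your proposal is correct and follows essentially the same route as the paper: Dickson's lemma via Lemma \ref{inf-finite} applied to the exponent pairs, strong induction on $|a|+|b|$, and the same two binomial decompositions $y^a-y^b=y^{a^{(i)}}(y^c-y^d)+y^d h_i=y^c h_i+y^{b^{(i)}}(y^c-y^d)$. The only (immaterial) difference is that you select between the two identities by comparing $|c|$ with $|d|$, whereas the paper compares $|a^{(i)}|$ with $|b^{(i)}|$; both choices yield the bound $\deg(g_j)\leq\max\{|a|,|b|\}$.
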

\begin{proof}
  Take $J=\left\{(a,b):\;y^a-y^b\in I\right\}-\left\{(0,0)\right\}$. By Lemma \ref{inf-finite} above, there is a finite subset $J_0=\left\{(a_1,b_1),\cdots,(a_m,b_m)\right\}\subset J$ such that $J\subset \bigcup_{v\in J_0}(v+\mathbf{Z}^{2L}_{\geq 0})$. Take $h_i=y^{a_i}-y^{b_i}$. We prove the assertion by induction on $|a|+|b|$. When $|a|+|b|=0$, the assertion is trivial. Take $(a,b)\in J$. Suppose that the assertion holds for any $y^{a'}-y^{b'}\in J$ with $|a'|+|b'|<|a|+|b|$. By the choice of $J_0$, there is some $i_0$ such that $(a,b)\geq (a_{i_0},b_{i_0})$. We have $|a-a_{i_0}|+|b-b_{i_0}|<|a|+|b|$. By induction hypothesis, there exist $g_1',\cdots,g_m'\in R[y_1,\cdots,y_L]$ such that 
  $$
  y^{a-a_{i_0}}-y^{b-b_{i_0}}=\sum_{i=1}^mg_i'h_i
  $$ and $\deg(g_i')\leq \max\left\{|a-a_{i_0}|,|b-b_{i_0}|\right\}$.
  
  If $|a_{i_0}|\geq |b_{i_0}|$, we have 
  $$
  y^a-y^b=y^{a-a_{i_0}}(y^{a_{i_0}}-y^{b_{i_0}})+y^{b_{i_0}}(y^{a-a_{i_0}}-y^{b-b_{i_0}}).
  $$Take $g_{i_0}=y^{a-a_{i_0}}+y^{b_{i_0}}g_{i_0}'$ and $g_i=y^{b_{i_0}}g_i'$ for any $i\neq i_0$. We have 
  $$
y^a-y^b=\sum_{i=1}^mg_ih_i
$$ and $\deg(g_i)\leq \max\{|a-a_{i_0}|,|b_{i_0}|+\deg(g_i')\}\leq \max\{|a|,|b|\}$. 

If $|b_{i_0}|\geq |a_{i_0}|$, we have 
  $$
  y^a-y^b=y^{b-b_{i_0}}(y^{a_{i_0}}-y^{b_{i_0}})+y^{a_{i_0}}(y^{a-a_{i_0}}-y^{b-b_{i_0}}).
  $$ 
  Then we conclude as above.
\end{proof}
Set 
$$L_{0}^{\dagger}=\bigcup_{r>1}\left\{\sum_{u\in \mathbf{Z}^n }a_ux^u:a_u\in
K,\;|a_u|r^{|u|}\rightarrow 0~\as~|u|\rightarrow +\infty\right\}.$$
Note that $L_\delta^\dagger\subset L_0^\dagger$. Similarly, we can define the complex $C^{\cdot}(L_{0}^{\dagger},\underline{D})$ and the endomorphism $\alpha_1$. %
By \cite[Proposition 3.4]{berthelot1984cohomologie} or \cite{Bourgeois}, we have 
\begin{Proposition}\label{Compa-Basic2}
We have a canonical isomorphism
\begin{displaymath}
R\Gamma_{\rig}(\mathbf{T}^n_k/K,f^{*}\mathcal{L}_{-\psi})\xrightarrow{\sim } C^{\cdot}(L^{\dagger}_{0},\underline{D})
\end{displaymath} which is compatible with their endomorphisms $F_*=q^n(F^*)^{-1}$ and $\alpha_1$.
 Furthermore, $\alpha_1$ is bijective on each cohomology group of $C^{\cdot}(L^{\dagger}_{0},\underline{D})$.
\end{Proposition}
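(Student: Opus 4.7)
The plan is to follow the standard recipe for computing the rigid cohomology of a smooth affine variety with coefficients in an overconvergent $F$-isocrystal, specialized to the torus $\mathbf{T}_k^n$ and to the pullback $f^*\mathcal{L}_{-\psi}$ of the Dwork isocrystal.

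First, I would fix a proper $R$-scheme $Y$ containing $\mathbf{T}_R^n$ as an open subscheme, for instance $Y=\mathbf{P}_R^n$ via the embedding $(x_1,\ldots,x_n)\mapsto(1{:}x_1{:}\cdots{:}x_n)$. Let $\widehat{Y}$ be its $p$-adic formal completion. Since $\mathbf{T}_R^n$ is smooth over $R$, the tube $]\mathbf{T}_k^n[_{\widehat{Y}}$ is the affinoid polyannulus $\{|x_i|=1,\ 1\leq i\leq n\}$, and $\mathbf{T}_K^{n,\an}$ is a strict neighborhood of it in $]Y_k[_{\widehat{Y}}$. An argument entirely analogous to Lemma~\ref{Compa-Basic}, but with $\delta$ replaced by all of $\mathbf{Z}^n$, identifies
$$L_0^\dagger \;\simeq\; \Gamma\bigl(\mathbf{T}_K^{n,\an},\, j^\dagger\mathcal{O}_{\mathbf{T}_K^{n,\an}}\bigr),\qquad H^i\bigl(\mathbf{T}_K^{n,\an},\, j^\dagger\mathcal{O}\bigr)=0\ \text{for}\ i\geq 1.$$

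Second, I would realize $f^*\mathcal{L}_{-\psi}$ as the rank-one overconvergent module with connection $\nabla=d-\pi\,d\hat{f}$ and Frobenius structure given (up to the sign corresponding to $-\psi$) by multiplication by $F_0(x)=\exp(H(x)-H(x^q))$ composed with the chosen lift of Frobenius. The de Rham complex of $\nabla$ with values in $L_0^\dagger$ becomes precisely $C^\cdot(L_0^\dagger,\underline{D})$ after the formal change of trivialization by $\exp(\pi\hat{f})$, by the very definition of $D_i$. Combined with the cohomological vanishing from Step~1, this yields the required quasi-isomorphism $R\Gamma_{\rig}(\mathbf{T}_k^n/K, f^*\mathcal{L}_{-\psi})\xrightarrow{\sim}C^\cdot(L_0^\dagger,\underline{D})$.

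Third, for the Frobenius compatibility, I would lift the $q$-th power Frobenius on $\mathbf{T}_k^n$ by $\Phi:x\mapsto x^q$ on $\mathbf{T}_K^{n,\an}$ and combine this with the Frobenius structure on $\mathcal{L}_{-\psi}$. After conjugation by $\exp(\pi\hat{f})$, the pull-back of $\frac{dx_{i_1}}{x_{i_1}}\wedge\cdots\wedge\frac{dx_{i_k}}{x_{i_k}}$ along $\Phi$ acquires a factor $q^k$; combining this with $F_*=q^n(F^*)^{-1}$ leaves exactly the factor $q^{n-k}$ in front of $\alpha_1$, matching the definition of $\alpha_1^{(k)}$. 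Bijectivity of $\alpha_1$ on each $H^i(C^\cdot(L_0^\dagger,\underline{D}))$ then follows from the well-known bijectivity of Frobenius on the rigid cohomology of an overconvergent $F$-isocrystal.

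The main technical obstacle is the careful bookkeeping of the Frobenius structure: keeping track of the sign in the normalization by $-\psi$, of the factor $q^{n-k}$ coming from pulling back $k$-forms, and of the fact that the trivialization used in $C^\cdot(L_0^\dagger,\underline{D})$ differs from the natural one by $\exp(\pi\hat{f})$. The identification $L_0^\dagger\simeq\Gamma(\mathbf{T}_K^{n,\an},j^\dagger\mathcal{O})$ is a direct adaptation of the proof of Lemma~\ref{Compa-Basic}. Both ingredients are present in \cite[Proposition~3.4]{berthelot1984cohomologie} and \cite{Bourgeois}, and the proposition amounts to specializing them to the present situation.
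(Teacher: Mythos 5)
The paper does not prove this proposition at all: it is quoted directly from \cite[Proposition 3.4]{berthelot1984cohomologie} and \cite{Bourgeois}, and your sketch is a faithful outline of exactly the argument contained in those references (compactify the torus, identify $\Gamma(j^{\dagger}\mathcal{O})$ with $L_0^{\dagger}$ together with the vanishing of higher cohomology, twist the de Rham complex of $f^{*}\mathcal{L}_{-\psi}$ by $\exp(\pi\hat{f})$, and track the factor $q^{n-k}$ on $k$-forms under $x\mapsto x^{q}$ to match $F_{*}=q^{n}(F^{*})^{-1}$ with $\alpha_1^{(k)}$). So your proposal is consistent with the paper's treatment; the only caveat is that the sign conventions relating $\psi$ versus $-\psi$ to $d+\pi\,d\hat{f}$ versus $d-\pi\,d\hat{f}$, which you flag but do not pin down, are precisely what the citation to \cite{berthelot1984cohomologie} is being relied upon to settle.
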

Let $\Sigma'$ be a regular refinement of $\Sigma$ and let $X'$ be the toric scheme over $R$ associated to $\Sigma'$. Denote by $X'_k,X'_K$ the special fiber and the generic fiber of $X'$ over $R$, respectively. Let $\mathbf{T}_R^n\rightarrow X'$ be the immersion of the maximal open dense torus, and let $D'=X'-\mathbf{T}_R^n$. Denote by $X'^{\an}_K$ (resp. $D'^{\an}_{K}$) the analytic space associated to $X'_K$ (resp. $D'_K$). Note that $X'$ is smooth and $D'$ is a normal crossing divisor in $X'$. Let $\Omega_{X'}^k(\log D')$ be the sheaf of differential forms of degree $k$ with logarithmic poles along $D'$. Then $\Omega_{X'^{\an}_K}^k(\log D'^{\an}_{K})$ is a free $\mathcal{O}_{X'^{\an}_K}$ module with basis $\frac{dx_{i_1}}{x_{i_1}}\wedge\cdots\wedge \frac{dx_{i_k}}{x_{i_k}}$, where $1\leq i_1<\cdots<i_k\leq n$ and $x_1,\cdots,x_n$ are coordinates of $\mathbf{T}_K^{n,\an}$. Consider the complex 
$$
E:=(\Omega'^{\cdot}_{X'^{\an}_K}(\log D'^{\an}_K),d)
$$ where the differential is given by $d=d_x+\sum_{i=1}^n \pi E_i\hat{f}\frac{dx_i}{x_i}$. Note that 
$$\hat{f}\in \Gamma(X^{\an}_K,\mathcal{O}_{X^{\an}_K})\cong \Gamma(X'^{\an}_K,\mathcal{O}_{X'^{\an}_K}).$$ Hence the twist de Rham complex $E$ is well defined. 
Let $\Sigma''$  be a regular fan containing $\Sigma'$ such that $|\Sigma''|=\mathbf{R}^n$, let $X''$ be the toric scheme over $R$ associated to $\Sigma''$ and let $\widehat{X''}$ be its formal completion. In \cite[Corollary A.4]{baldassarri2004dwork}, taking $\mathcal{X}=\widehat{X''}$ , $Z=X''-\mathbf{T}_R^n$, $V=X'_k$, $U=\mathbf{T}^n_k$ and $W=X'^{\an}_K$, we get a canonical isomorphism
\begin{equation}\label{comp-log}
    R\Gamma(X'^{\an}_K,j^{\dagger}\Omega^{\cdot}_{X'^{\an}_K}(\log D'^{\an}_K))
\cong R\Gamma_{\rig}(\mathbf{T}^n_k,\mathcal{L}_{-\psi,f}).
\end{equation}
Now we can give the proof of \textbf{Theorem} \ref{Thm1}.
\begin{proof}
Take cones $\sigma_1,\cdots,\sigma_m\in \Sigma'$ such that the open sets $U_i$ defined by $\sigma_i$ form an affine open covering of $X'$. Let $j_{\underline{i}}:=j_{i_1\cdots i_k}$ be the inclusion $U_{\underline{i}}:=U_{i_1}\cap\cdots \cap U_{i_k}\subset X'$. Note that $U_{\underline{i}}$ is the affine open set defined by the cone $\sigma_{\underline{i}}:=\sigma_{i_1}\cap \cdots \cap \sigma_{i_k}$. By \cite[Proposition 2.1.8]{berthelot1996cohomologie}, we have an exact sequence
$$
\xymatrix@R=12pt@C=8pt{
0\ar[r] & j^{\dagger}E\ar[r] & \underset{i}{\prod}j_i^{\dagger}E\ar[r]&\underset{i_1<i_2}{\prod}j_{i_1i_2}^{\dagger}E \ar[r]&\cdots\ar[r]&j_{1\cdots m}^{\dagger}E\ar[r]&0,
}
$$
where 
$
j^{\dagger}_{\underline{i}}E=\varinjlim_{V}j_{V*}j_{V}^{*}(E)
$ and $j_V:V\rightarrow X'^{\an}_K$ runs through strict neighborhoods of $]U_{\underline{i},k}[_{\widehat{\overline{X'}}}$ contained in
$X'^{\an}_K$. 
We still denote by $j_{\underline{i}}$ the inclusion $U^{\an}_{\underline{i},K}\subset X'^{\an}_K$. We have
$
j^{\dagger}_{\underline{i}}E=j_{\underline{i}*}j^{\dagger}(E|_{U^{\an}_{\underline{i},K}})
$ by \cite[1.2.4 (ii) and 2.1.1]{berthelot1996cohomologie}. 
Let $\check{\sigma_{\underline{i}}}$ be the dual of $\sigma_{\underline{i}}$ and $\sigma=\check{\delta}$. Applying Lemma \ref{Compa-Basic} to each $U^{\an}_{\underline{i},K}$, we know that
$R\Gamma(X'^{\an}_K,j^{\dagger}E)$ can be represented by the total complex of
\begin{equation}\label{total1}
\xymatrix@R=12pt@C=8pt{
& 0\ar[r] & \underset{i}{\prod}\Gamma(U^{\an}_{i,K},j^{\dagger}((E|_{U^{\an}_{i,K}})))\ar[r]&\cdots\ar[r]&\Gamma(U^{\an}_{1\cdots n,K},j^{\dagger}((E|_{U^{\an}_{1\cdots m,K}})))\ar[r]&0
}
\end{equation}
and $\Gamma(U^{\an}_{\underline{i},K},j^{\dagger}\mathcal{O}_{U^{\an}_{\underline{i},K}})\cong L^{\dagger}_{\check{\sigma_{\underline{i}}}}$, where
$$
L^{\dagger}_{\check{\sigma_{\underline{i}}}}=\bigcup_{\lambda>1}\left\{\sum_{u\in \check{\sigma_{\underline{i}}}\cap \mathbf{Z}^{n}}A_ux^{u}:\;A_{u}\in K,\;|A_u|\lambda^{|u|}\xrightarrow{|u|\rightarrow+\infty} 0\right\}
$$
which can be regarded as a subalgebra of $L_{0}^{\dagger}$. Thus (\ref{total1}) can be represented by the bicomplex 
\begin{equation}\label{total2}
\xymatrix@R=12pt@C=6pt{
& 0\ar[r]& \underset{i}{\prod}C^{\cdot}(L^{\dagger}_{\check{\sigma}_{i}})\ar[r]&\underset{i_1<i_2}{\prod}C^{\cdot}(L^{\dagger}_{\check{\sigma_{\underline{i}}}}) \ar[r]&\cdots\ar[r]&C^{\cdot}(L^{\dagger}_{\check{\sigma}_{i\dots m}})\ar[r]&0,
}
\end{equation} where all $C^{\cdot}(L_{\check{\sigma}_{\underline{i}}}^{\dagger})$ are regarded as subcomplexes of $C^{\cdot}(L_0^{\dagger},\underline{D})$. By Lemma \ref{dagger-ex} below and a spectral sequence argument, $C^{\cdot}(L_{\delta}^{\dagger},\underline{D})$ can be represented by the total complex of (\ref{total2}). Hence $$R\Gamma(X'^{\an}_K,j^{\dagger}E)\cong C^{\cdot}(L^{\dagger}_{\delta},\underline{D}).$$ By Proposition \ref{Compa-Basic2}, the isomorphism (\ref{comp-log}) can be represented by the inclusion
$$
C^{\cdot}(L^{\dagger}_{\delta},\underline{D})\hookrightarrow C^{\cdot}(L^{\dagger}_{0},\underline{D}).
$$ By (\ref{Commu-Dwork1}) and Proposition \ref{Compa-D}, we have 
$$
H_{n-i}(K_{\cdot}(L_{\delta}^{\dagger},\underline{\hat{D}}))\xrightarrow{\sim}H^{i}_{\rig}(\mathbf{T}^n_k/K,f^{*}\mathcal{L}_{-\psi})
$$
for each $i$. The second assertion of Theorem \ref{Thm1} follows from (\ref{Commu-Dwork1}) and Proposition \ref{Compa-Basic2}.
\end{proof}
\begin{Lemma}\label{exact-toric}
Notation as above, 
we have an exact sequence
\begin{equation}\label{exact-toric-nondagger}
\xymatrix@R=12pt@C=6pt{
& 0\ar[r] & L_{\check{\sigma},R}\ar[r] & \underset{i}{\prod}L_{\check{\sigma}_{i},R}\ar[r]&\underset{i_1<i_2}{\prod}L_{\check{\sigma_{\underline{i}}},R} \ar[r]&\cdots\ar[r]&L_{\check{\sigma}_{i\dots m},R}\ar[r]&0
,}
\end{equation}
where $L_{\check{\tau},R}=R[x^{\check{\tau}\cap \mathbf{Z}^n}]$ is regarded as a subalgebra of $R[x^{\mathbf{Z}^{n}}]$, for $\tau=\sigma,\sigma_{\underline{i}},\cdots$. The boundary map $\underset{i_1<\cdots <i_k}{\prod}L_{\check{\sigma_{\underline{i}}},R}\xrightarrow{d} \underset{i_1<\cdots<i_{k+1}}{\prod}L_{\check{\sigma_{\underline{i}}},R}$ is given by 
$$(dg)_{i_1\cdots i_{k+1}}=\sum_{s=1}^{k+1}(-1)^{s-1}g_{i_1\cdots \hat{i}_s\cdots i_{k+1}}$$ for any $g\in \underset{i_1<\cdots <i_k}{\prod}L_{\check{\sigma_{\underline{i}}},R}$.
\end{Lemma}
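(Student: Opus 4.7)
The approach is to identify (\ref{exact-toric-nondagger}) with the augmented \v Cech complex $\check C^{\bullet}(\{U_i\}_{i=1}^m, \mathcal O_{X'})$ of the separated toric scheme $X'$ and then to verify its exactness by decomposing according to the $\mathbf Z^n$-graded pieces coming from the torus action. Each $L_{\check\tau,R}$ decomposes naturally as $\bigoplus_{u \in \check\tau \cap \mathbf Z^n} R \cdot x^u$, and the \v Cech differentials preserve this grading, so the whole complex splits as $\bigoplus_{u \in \mathbf Z^n} C^\bullet_u$, where $C^\bullet_u$ has $k$-th term $\prod_{|\underline i|=k} R \cdot \mathbf 1_{[u \in \check\sigma_{\underline i}]}$ (its position-zero term being $R$ if $u \in \delta$ and $0$ otherwise, with the convention $\check\sigma_\emptyset = \check\sigma$). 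It therefore suffices to show each $C^\bullet_u$ is exact.

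If $u \in \delta = \check\sigma^\vee$, then the inclusions $\sigma_{\underline i} \subset \sigma$ dualize to $\check\sigma_{\underline i} \supset \delta \ni u$, so every entry of $C^\bullet_u$ is $R$, and $C^\bullet_u$ becomes the augmented simplicial cochain complex of the full $(m-1)$-simplex, which is acyclic. If $u \notin \delta$, the position-zero term vanishes; I introduce the abstract simplicial complex
$\mathcal K_u = \{\underline i \subset [m] : u \notin \check\sigma_{\underline i}\}$, which is closed under taking subsets because $\underline i' \subset \underline i$ implies $\sigma_{\underline i'} \supset \sigma_{\underline i}$ and hence $\check\sigma_{\underline i'} \subset \check\sigma_{\underline i}$. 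A direct comparison then places $C^\bullet_u$ in a short exact sequence
\begin{equation*}
    0 \longrightarrow C^\bullet_u \longrightarrow C^\bullet_{\mathrm{full}} \longrightarrow \tilde C^\bullet(\mathcal K_u) \longrightarrow 0,
\end{equation*}
whose middle term is the augmented cochain complex of the full $(m-1)$-simplex (acyclic) and whose right-hand term is the augmented cochain complex of $\mathcal K_u$. The resulting long exact sequence reduces the vanishing of $H^*(C^\bullet_u)$ to the vanishing of $\tilde H^*(\mathcal K_u)$, that is, to the contractibility of $|\mathcal K_u|$.

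For contractibility I would invoke the nerve theorem. The relation $\underline i \in \mathcal K_u \Leftrightarrow \sigma_{\underline i} \cap H_u^{<0} \neq \emptyset$, where $H_u^{<0} = \{v : \langle u, v\rangle < 0\}$, exhibits $\mathcal K_u$ as the nerve of the cover $\{\sigma_i \cap H_u^{<0}\}_{i=1}^m$ of the open convex set $V_u := \check\delta \cap H_u^{<0}$. Every finite intersection $\sigma_{\underline i}\cap H_u^{<0}$ is convex, hence empty or contractible; by passing to a simplicial refinement of $\Sigma'$ compatible with the hyperplane $\{\langle u, \cdot\rangle = 0\}$ the cover becomes a subcomplex cover of a triangulation of $V_u$, and the classical simplicial nerve theorem yields $|\mathcal K_u| \simeq V_u$. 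Since $u \notin \delta = \check\sigma^\vee$ means there exists $v \in \check\delta$ with $\langle u, v\rangle < 0$, the set $V_u$ is a nonempty convex set, hence contractible, and so is $\mathcal K_u$. The main obstacle is the careful application of the nerve theorem in this closed-cover setting; the simplicial refinement just described reduces the argument to the classical statement, after which the contractibility of $V_u$ closes the proof.
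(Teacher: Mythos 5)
Your proof is correct, and it takes a genuinely different route from the paper's. The paper also identifies the sequence (\ref{exact-toric-nondagger}) (minus its first term) with the \v{C}ech complex of $\mathcal{O}_{X'}$ for the affine cover $\{U_{\sigma_i}\}$, but then disposes of exactness by citation: \v{C}ech cohomology with respect to an affine cover of a separated scheme computes sheaf cohomology, and $R^k\phi_*\mathcal{O}_{X'}=0$ for $k\geq 1$ for the proper toric morphism $\phi\colon X'\to X$ by \cite[Proposition 9.2.5]{cox}; exactness at the second spot is then the observation $\check{\sigma}=\bigcap_i\check{\sigma}_i$, which you use as well. You instead prove the vanishing directly: the $\mathbf{Z}^n$-weight decomposition splits the complex into finite-rank pieces $C^\bullet_u$, and comparison with the augmented cochain complexes of the full simplex and of the nerve $\mathcal{K}_u$ of the cover $\{\sigma_i\cap H_u^{<0}\}$ of $\check{\delta}\cap H_u^{<0}$ reduces everything to the contractibility of a nonempty convex set (plus the easy check that $\mathcal{K}_u$ has a vertex, which handles exactness at the second spot). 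This is essentially an unwinding of the proof of the Demazure-type vanishing that the paper quotes, so your argument is self-contained where the paper's is not, and it makes transparent that the statement holds over the ring $R=\mathcal{O}_K$ rather than only over a field, since the combinatorial acyclicity is coefficient-independent. The one point requiring care, which you correctly flag, is the closed-cover form of the nerve theorem; the cleanest fix is to replace the open half-space $H_u^{<0}$ by the affine slice $\{v:\langle u,v\rangle=-1\}$, which turns the data into a cover of a convex polyhedron by convex polyhedra with convex intersections, where the classical nerve theorem applies without further refinement.
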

\begin{proof}
Consider the $\check{\textrm{C}}\textrm{ech}$ complex $C^{\cdot}(\mathfrak{U},\mathcal{O}_{X'})$ defined by the affine open covering $\mathfrak{U}=\left\{U_{\sigma_i}\right\}_{i=1}^m$. We have $\check{H}^{k}(\mathfrak{U},\mathcal{O}_{X'})\cong H^k(X',\mathcal{O}_{X'})$ for any $k$. Since $\phi:X'\rightarrow X$ is proper and $X$ is affine, we have
$R^k\phi_{*}\mathcal{O}_{X'}=H^{k}(X',\mathcal{O}_{X'})^{\sim}$. By \cite[Proposition 9.2.5]{cox}, we have $R^k\phi_{*}\mathcal{O}_{X'}=0$ for any $k\geq 1$. Thus (\ref{exact-toric-nondagger}) is exact except at the second position. Since $\sigma=\cup_{i=1}^{m} \sigma_i$, we have $\check{\sigma}=\cap_{i=1}^m \check{\sigma_i}$. Thus
$L_{\check{\sigma},R}=\textrm{Ker}(\underset{i}{\prod}L_{\check{\sigma}_{i},R}\rightarrow \underset{i_1<i_2}{\prod}L_{\check{\sigma_{\underline{i}}},R})$ and then (\ref{exact-toric-nondagger}) is exact.
\end{proof}

\begin{Lemma}\label{dagger-ex}
Notation as above, we have an exact sequence
\begin{equation}\label{equa-dagger-toric}
\xymatrix@R=12pt@C=6pt{
& 0\ar[r]&L_{\delta}^{\dagger}\ar[r]  & \underset{i}{\prod}L^{\dagger}_{\check{\sigma}_{i}}\ar[r]&\underset{i_1<i_2}{\prod}L^{\dagger}_{\check{\sigma_{\underline{i}}}} \ar[r]&\cdots\ar[r]&L^{\dagger}_{\check{\sigma}_{i\dots m}}\ar[r]&0
,}
\end{equation}where the boundary map is the same as (\ref{exact-toric-nondagger}).
\end{Lemma}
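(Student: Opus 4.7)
The plan is to deduce Lemma \ref{dagger-ex} from the polynomial exact sequence of Lemma \ref{exact-toric} by exploiting the natural $\mathbf{Z}^n$-grading of the \v Cech complex and constructing a $p$-adically uniformly bounded contracting homotopy.

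The complex (\ref{exact-toric-nondagger}) is graded by the monomial exponent $u \in \mathbf{Z}^n$, and the \v Cech differential is a $\pm 1$-linear combination of the natural inclusions $L_{\check{\sigma}_{\underline{j}},R} \hookrightarrow L_{\check{\sigma}_{\underline{i}},R}$ (for $\underline{j} \subset \underline{i}$), so it preserves this grading. For fixed $u$, the $u$-graded piece is a finite complex of free $R$-modules, exact by Lemma \ref{exact-toric}. Its combinatorial type is determined by the upward-closed family
\[
T_u := \{\underline{i} : u \in \check{\sigma}_{\underline{i}}\} \subset 2^{\{1,\ldots,m\}},
\]
of which there are only finitely many. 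Because $R = \mathcal{O}_K$ is a discrete valuation ring and each term of the $u$-graded complex is of finite rank and free, the complex splits into short exact sequences of free $R$-modules, and one may choose an $R$-linear contracting homotopy $h_u$ with matrix entries in $R$; in particular $\|h_u\| \le 1$ in the $p$-adic operator norm, uniformly in $u$.

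Given a $d$-closed element $\eta = \sum_u \eta_u x^u \in \prod_{|\underline{i}| = k+1} L^{\dagger}_{\check{\sigma}_{\underline{i}}}$, I would set $\xi := \sum_u h_u(\eta_u) x^u$. Since $d$ preserves grading, $d\xi = \eta$ by the homotopy identity on each graded piece. The uniform bound $|h_u(\eta_u)| \le |\eta_u|$ preserves the decay condition: if $|\eta_u|\lambda^{|u|} \to 0$ for some $\lambda > 1$, then $|h_u(\eta_u)|\lambda^{|u|} \to 0$ with the same $\lambda$; hence $\xi \in \prod_{|\underline{i}| = k} L^{\dagger}_{\check{\sigma}_{\underline{i}}}$, giving exactness in the interior.

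For the leftmost position, any $d$-closed $(g_i) \in \prod_i L^{\dagger}_{\check{\sigma}_i}$ satisfies $g_i = g_j$ as elements of the ambient $L^{\dagger}_0$, so the $g_i$ agree with a single $g \in L^{\dagger}_0$ whose support lies in $\bigcap_i \check{\sigma}_i = \check{\sigma} = \delta$. The comparability of $w(u)$ and $|u|$ on $\delta$ (a consequence of the boundedness of $\Delta(f)$ together with $0 \in \Delta(f)$) identifies $L^{\dagger}_\delta$ with $\{h \in L^{\dagger}_0 : \operatorname{supp}(h) \subset \delta\}$, placing $g$ in $L^{\dagger}_\delta$. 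The main obstacle is producing the contracting homotopies with a uniform $p$-adic bound; this is handled by the finiteness of the combinatorial types $T_u$ combined with the DVR-splitting of the graded complexes.
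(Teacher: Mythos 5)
Your proof is correct and follows essentially the same route as the paper: both reduce exactness to the integral \v{C}ech complex of Lemma \ref{exact-toric} by exploiting the fact that the differential preserves the monomial grading, and both preserve overconvergence by choosing preimages with coefficients in $R=\mathcal{O}_K$. The only difference is packaging: the paper groups monomials into degree-annuli and factors out powers of $\pi$ (losing a harmless factor of $2$ in the radius $b$), whereas you work one graded piece at a time with a contracting homotopy of operator norm $\leq 1$, which is automatic from integrality and makes the finiteness of the combinatorial types $T_u$ unnecessary.
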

\begin{proof}
 By the definition of $\Sigma'$, we have $\sigma=\cup_{i=1}^m\sigma_i$. Thus $\delta=\check{\sigma}=\cap_{i=1}^m \check{\sigma_i}$ and $L^{\dagger}_{\check{\sigma}}=\textrm{Ker}(\underset{i}{\prod}L^{\dagger}_{\check{\sigma}_{i}}\rightarrow \underset{i_1<i_2}{\prod}L^{\dagger}_{\check{\sigma_{\underline{i}}}} )$.
Now consider $\underset{i_1<\cdots <i_k}{\prod}L^{\dagger}_{\check{\sigma_{\underline{i}}}}\xrightarrow{d} \underset{i_1<\cdots<i_{k+1}}{\prod}L^{\dagger}_{\check{\sigma_{\underline{i}}}}$, take $g=(g_{\underline{i}})\in \underset{i_1<\cdots <i_k}{\prod}L^{\dagger}_{\check{\sigma_{\underline{i}}}}$. Suppose that for all $\underline{i}$, $g_{\underline{i}}\in L''(b,c)$ for some $b>0$, $c\in \mathbf{R}$, where
\begin{displaymath}
L''(b,c)=\left\{\sum_{u}A_ux^u:\;A_u\in K,\; \ord A_u\geq b|u|+c\right\}.
\end{displaymath}
Since we always can choose some element $a\in R$ such that $ag_i\in L''(b,0)$ for all $i$, we treat the case $c=0$. Now $g_{\underline{i}}$ can be written as
$$
g_{\underline{i}}=\sum_{k=0}^{+\infty}\pi^{k}g_{\underline{i},k},~g_{\underline{i},k}\in R[x^{\check{\sigma}_{\underline{i}}\cap \mathbf{Z}^n}],
$$ where $g_{\underline{i},k}$ is a sum of monomials of degree lying in $[\frac{k}{b(p-1)},\frac{k+1}{b(p-1)})$. Note that the degree of polynomial defines a grading on the complex (\ref{exact-toric-nondagger}), and $d$ preservers the grading. Write $g=\sum_{k=0}^{+\infty}\pi^{k}g_k$, where $g_k=(g_{\underline{i},k})$. If $d(g)=0$, we have $d(g_k)=0$ for
each $k$. 
 By Lemma \ref{exact-toric}, there exists
$$
h_k=(h_{\underline{i},k})_{\underline{i}}\in \underset{i_1<\cdots <i_{k-1}}{\prod}L_{\check{\sigma_{\underline{i}}},R}$$ such that $d(h_k)=g_k$ for each $k$ and we may assume that each $h_{\underline{i},k}$ is a sum of monomials of degree lying in $[\frac{k}{b(p-1)},\frac{k+1}{b(p-1)})$.
 Suppose that $h_{\underline{i},k}=\sum_{u}A_{\underline{i},k,u}x^u$, we have 
$$\ord(\pi^{k}A_{\underline{i},k,u})\geq \frac{k}{p-1}> \frac{k}{(p-1)\frac{k+1}{b(p-1)}}|u|\geq \frac{b}{2}|u|$$ for any $k\geq 1$. Thus
$\sum_{k=1}^{+\infty}\pi^{k}h_{\underline{i},k}\in L''(\frac{b}{2},0)$. Note that $h_{\underline{i},0}$ is a polynomial for each $\underline{i}$.  We have 
$$
h:=\sum_{k=0}^{+\infty}\pi^{k}h_k\in \underset{i_1<\cdots <i_{k-1}}{\prod}L^{\dagger}_{\check{\sigma_{\underline{i}}}}
$$ and $d(h)=g$. Hence (\ref{equa-dagger-toric}) is exact.
\end{proof}

\section{Applications}
Define 
$$B=\left\{\sum\limits_{u\in \delta\cap\mathbf{Z}^n }a_u\widetilde{\gamma}^{Mw(u)}x^u:\;a_u\in K,\;a_u \rightarrow 0 ~\as ~w(u)\rightarrow +\infty\right\}.$$
We can define the 
Koszul complexes $K_{\cdot}(L(b),\underline{\hat{D}})$ $(0<b\leq \frac{p}{p-1})$ and $K_{\cdot}(B,\underline{\hat{D}})$ with actions of $\alpha$, respectively. They can be regarded as subcomplexes of $K_{\cdot}(L_{\delta}^{\dagger},\underline{\hat{D}})$.  
\begin{Proposition}\label{Dwork-surj}
For each $i$, the inclusions $B\hookrightarrow L(\frac{1}{p-1})\hookrightarrow L^{\dagger}_{\delta}$ induce a surjective homomorphism $H_{i}(K_{\cdot}(B,\underline{\hat{D}}))\rightarrow H_{i}(K_{\cdot}(L_{\delta}^{\dagger},\underline{\hat{D}})).$
\end{Proposition}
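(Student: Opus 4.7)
My plan is to combine the weight-improving action of $\alpha$ on the spaces $L(b,c)$ with the bijectivity of $\alpha$ on $H_\cdot(K_\cdot(L_\delta^\dagger,\underline{\hat{D}}))$ from Theorem~\ref{Thm1}, together with the finite-dimensionality of this homology group. I would begin with two chain-level facts: since $F_0\in L(\tfrac{p}{q(p-1)},0)$, multiplication by $F_0$ sends $L(b,c)$ into $L(\min(b,\tfrac{p}{q(p-1)}),c)$, while $\psi_q$ sends $L(b',c)$ into $L(qb',c)$, yielding $\alpha(L(b,c))\subset L(\min(qb,\tfrac{p}{p-1}),c)$; moreover, when $b>\tfrac{1}{q(p-1)}$, writing $\alpha(g)=\sum a'_u x^u$ and factoring $a'_u=(a'_u\widetilde{\gamma}^{-Mw(u)})\widetilde{\gamma}^{Mw(u)}$, the estimate $\ord(a'_u)\geq\min(qb,\tfrac{p}{p-1})w(u)+c$ gives $\ord(a'_u\widetilde{\gamma}^{-Mw(u)})\geq(\min(qb,\tfrac{p}{p-1})-\tfrac{1}{p-1})w(u)+c\to+\infty$ as $w(u)\to\infty$, which is precisely the defining condition of $B$.

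Next I would iterate. Any cycle $\xi\in K_i(L_\delta^\dagger,\underline{\hat{D}})$ lies in $K_i(L(b_0))$ for some $b_0>0$, and choosing $n$ with $q^{n-1}b_0>\tfrac{1}{q(p-1)}$ gives $\alpha^n(\xi)\in K_i(B)$; this is automatically a cycle since $\alpha$ is a chain map on the Koszul complex, and its class in $H_i(K_\cdot(L_\delta^\dagger,\underline{\hat{D}}))$ equals $\alpha^n[\xi]$. Denoting by $M$ the image of $H_i(K_\cdot(B,\underline{\hat{D}}))\to H_i(K_\cdot(L_\delta^\dagger,\underline{\hat{D}}))$, we obtain $\alpha^n[\xi]\in M$, with $n$ depending on $[\xi]$; equivalently $H_i(K_\cdot(L_\delta^\dagger,\underline{\hat{D}}))=\bigcup_{n\geq 0}\alpha^{-n}(M)$.

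To promote this to $M=H_i(K_\cdot(L_\delta^\dagger,\underline{\hat{D}}))$ I would use dimension counting. By Theorem~\ref{Thm1} the target is isomorphic to $H^{n-i}_{\rig}(\mathbf{T}^n_k/K,f^*\mathcal{L}_{-\psi})$, hence finite-dimensional over $K$, and $\alpha$ acts on it bijectively. Since $M$ is $\alpha$-stable (as $\alpha$ preserves $K_\cdot(B,\underline{\hat{D}})$) and $\alpha^{-n}$ is a linear bijection, each $\alpha^{-n}(M)$ is a $K$-subspace of the same dimension as $M$, and the increasing chain $M\subset\alpha^{-1}(M)\subset\alpha^{-2}(M)\subset\cdots$ of equi-dimensional subspaces of a finite-dimensional space must be constant. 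Combined with the fact that the union equals the whole homology, this yields $M=H_i(K_\cdot(L_\delta^\dagger,\underline{\hat{D}}))$, i.e., the desired surjectivity.

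The main obstacle is this last conceptual step: the naive iteration shows only that each class has \emph{some} $\alpha$-power landing in $M$, with the required power depending on the weight $b_0$ of the chosen representative; converting this into the statement that the class itself lies in $M$ crucially relies on finite-dimensionality (from Theorem~\ref{Thm1} and the finiteness of rigid cohomology on the smooth variety $\mathbf{T}^n_k$) together with the $\alpha$-bijectivity, which force the ascending chain $\alpha^{-n}(M)$ to collapse onto $M$ from the start.
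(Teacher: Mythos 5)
Your argument is correct and follows essentially the same route as the paper: both exploit that powers of $\alpha$ improve the weight so as to land in $B$ (the paper via $L(\frac{p}{p-1})\subset B$), combined with the finite-dimensionality of $H_{i}(K_{\cdot}(L_{\delta}^{\dagger},\underline{\hat{D}}))$ and the bijectivity of $\alpha$ there from Theorem~\ref{Thm1}. The only cosmetic difference is that the paper first uses finite-dimensionality to pick a single $b$ with $H_{i}(K_{\cdot}(L(b),\underline{\hat{D}}))$ surjecting and then applies one fixed power $\alpha^{m}$ in a commutative square, whereas you argue class-by-class and let the ascending chain $\alpha^{-n}(M)$ stabilize.
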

\begin{proof}
We use the same method as \cite[Corollaire 1.3]{Bourgeois}. 
The inclusions $L(\frac{p}{p-1})\hookrightarrow B\hookrightarrow L_{\delta}^{\dagger}$ imply that it suffices to prove the homomorphism $H_{i}(K_{\cdot}(L(\frac{p}{p-1}),\underline{\hat{D}}))\rightarrow H_{i}(K_{\cdot}(L_{\delta}^{\dagger},\underline{\hat{D}}))$ is surjective.
Note that $L_{\delta}^{\dagger}=\bigcup_{0<b\leq \frac{p}{p-1}}L(b)$. By Theorem \ref{Thm1} and \cite[3.10]{berthelot1997finitude}, $H_{i}(K_{\cdot}(L_{\delta}^{\dagger},\underline{\hat{D}}))$ is finite dimensional. Thus there exists some $0<b\leq \frac{p}{p-1}$ such that
$
H_{i}(K_{\cdot}(L(b),\underline{\hat{D}}))\rightarrow H_{i}(K_{\cdot}(L_{\delta}^{\dagger},\underline{\hat{D}}))
$ is surjective. Note that $\psi_q(L(b))\subset L(qb)$. 
Consider the following composite
$$
L(b)\xrightarrow{\cdot F_0}L\Big(\min\left\{\frac{p}{q(p-1)},b\right\}\Big)\xrightarrow{\psi_q}L\Big(\min\left\{\frac{p}{p-1},qb\right\}\Big).
$$ Hence
$$\alpha(L(b))\subset L\Big(\min\left\{\frac{p}{p-1},qb\right\}\Big).$$ So there exists a positive integer $m$ such that
$$
\alpha^{m}(H_{i}(K_{\cdot}(L(b),\underline{\hat{D}})))\subset H_{i}\Big(K_{\cdot}(L(\frac{p}{p-1}),\underline{\hat{D}})\Big).$$ 
We have a commutative diagram
$$
\xymatrix{
& H_{i}(K_{\cdot}(L(b),\underline{\hat{D}})) \ar[d]^{\alpha^m} \ar[r] &  H_{i}(K_{\cdot}(L_{\delta}^{\dagger},\underline{\hat{D}}))\ar[d]^{\alpha^m}\\
& H_{i}(K_{\cdot}(L(\frac{p}{p-1}),\underline{\hat{D}})) \ar[r] & H_{i}(K_{\cdot}(L_{\delta}^{\dagger},\underline{\hat{D}}))
.}
$$
The top horizontal arrow is surjective by the choice of $b$. 
The right vertical arrow $\alpha^m$ is bijective by Theorem \ref{Thm1}. So the bottom horizontal arrow is surjective.
\end{proof}
Let $X=\mathbf{T}_k^r\times \mathbf{A}_k^{n-r}$ and
$$f\in k[x_1,\cdots,x_n,(x_1\cdots x_r)^{-1}].$$ For any subset $A\subset S_r=\left\{r+1,\cdots,n\right\}$, let $$\mathbf{R}_A^n=\left\{(x_1,\cdots,x_n):\;x_i=0~\textrm{for}~i\in A\right\}.$$ Let $V_A(f)$ be the volume of $\Delta(f)\cap \mathbf{R}_A^n$ with respect to Lebesgue measure on $\mathbf{R}_A^n$. Set 
$$v_{A}(f)=\sum_{B\subset A}(-1)^{|B|}(n-|B|)!V_B(f)$$
and $\tilde{r}= \dim \Delta(f_{S_r})$. We say that $f$ is \textit{commode with respect to} $S_r$ if for all subsets $A\subset S_r$, we have $\dim\Delta(f_A)=\tilde{r}+|S_r-A|,
$ where $f_A:=\prod_{j\in A} \theta_{j}\circ f(x)$ and $\theta_j$ is the map such that $x_j\mapsto 0$.
\begin{Theorem}\label{Calculate-rig}
Let $X=\mathbf{T}_k^r\times \mathbf{A}_k^{n-r}$, $f\in \Gamma(X,\mathcal{O}_X)$. Suppose that $f$ is commode with respect to $S_r$, nondegenerate with respect to $\Delta(f)$ and $\dim\Delta(f)=n$. Then

(i) $H_{c,\rig}^i(X,f^{*}\mathcal{L}_{\psi})=H_{\rig}^i(X,f^{*}\mathcal{L}_{-\psi})=0$ if $i\neq n$.

(ii) $\dim H_{c,\rig}^n(X,f^{*}\mathcal{L}_{\psi})=\dim H_{\rig}^n(X,f^{*}\mathcal{L}_{-\psi})=v_{S_r}(f)$.
\end{Theorem}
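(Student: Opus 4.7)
I would induct on $s := n - r$, the number of affine factors. The base case $s = 0$ is Theorem \ref{Thm2} applied to $X = \mathbf{T}_k^n$: there $S_r = \emptyset$ and $v_{\emptyset}(f) = n!\,\Vol(\Delta(f))$. The commode hypothesis at $A = \emptyset$, combined with $\dim\Delta(f) = n$, forces $\tilde{r} = r$, which I record up front.

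For $s \geq 1$, I would choose $j \in S_r$---after relabeling, $j = n$---and split $X$ into the open $X' := X \cap \{x_n \neq 0\} \cong \mathbf{T}_k^{r+1} \times \mathbf{A}_k^{n-r-1}$ carrying $f|_{X'}$, and the closed $X'' := X \cap \{x_n = 0\} \cong \mathbf{T}_k^r \times \mathbf{A}_k^{n-r-1}$ carrying $\theta_n f$. The first substantive step is a combinatorial verification that $f|_{X'}$ (affine index set $S_r \setminus \{n\}$, Newton dimension $n$) and $\theta_n f$ (same index set, Newton dimension $n-1$) both satisfy the commode, nondegeneracy, and top-dimension hypotheses. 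Since every exponent of $f$ has $(w_j)_n \geq 0$, one gets $\Delta(\theta_n f) = \Delta(f) \cap \{x_n = 0\}$; the faces not containing the origin coincide, and on them $(\theta_n f)_\sigma = f_\sigma$, so nondegeneracy of $f$ descends to $\theta_n f$. The commode identity $\dim\Delta(f_B) = r + |S_r - B|$ restricted to $B \subset S_r \setminus \{n\}$ gives commodity of $f|_{X'}$ (with $\tilde{r}' = r+1$), and specialized to $B \cup \{n\}$ gives commodity of $\theta_n f$ (with $\tilde{r}'' = r$); nondegeneracy of $f|_{X'}$ is automatic since its Newton polytope and face polynomials are those of $f$.

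The inductive hypothesis then supplies vanishing of $H_{c,\rig}^i(X', f^*\mathcal{L}_\psi)$ for $i \neq n$ with top dimension $v_{S_r \setminus \{n\}}(f)$, and of $H_{c,\rig}^i(X'', (\theta_n f)^*\mathcal{L}_\psi)$ for $i \neq n-1$ with top dimension $v_{S_r \setminus \{n\}}(\theta_n f)$. Feeding these into the excision long exact sequence for compactly supported rigid cohomology of the pair $(X', X'')$ kills $H_{c,\rig}^i(X, f^*\mathcal{L}_\psi)$ for $i \notin \{n-1, n\}$ and leaves
\begin{equation*}
0 \to H_{c,\rig}^{n-1}(X, f^*\mathcal{L}_\psi) \to H_{c,\rig}^{n-1}(X'', (\theta_n f)^*\mathcal{L}_\psi) \xrightarrow{\delta} H_{c,\rig}^n(X', f^*\mathcal{L}_\psi) \to H_{c,\rig}^n(X, f^*\mathcal{L}_\psi) \to 0.
\end{equation*}
Affine vanishing on the smooth affine variety $X$ of pure dimension $n$, obtained via Poincar\'e duality $H_{c,\rig}^i(X, f^*\mathcal{L}_\psi) \cong H_\rig^{2n-i}(X, f^*\mathcal{L}_{-\psi})^\vee$ from $H_\rig^j = 0$ for $j > n$, kills the leftmost term, so $\delta$ is injective and dimension counting together with the identity $V_B(\theta_n f) = V_{B \cup \{n\}}(f)$ rewrites $v_{S_r \setminus \{n\}}(f) - v_{S_r \setminus \{n\}}(\theta_n f)$ as $v_{S_r}(f)$, giving (ii) for compact supports. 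Part (i) and the $H_\rig^\cdot(X, f^*\mathcal{L}_{-\psi})$ statements follow by the same Poincar\'e duality.

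The main obstacle is the combinatorial transfer of nondegeneracy and commodity to $\theta_n f$ and $f|_{X'}$: the commode hypothesis is tailored exactly for this step, but one must be careful in identifying $\Delta(\theta_n f)$ with the coordinate-hyperplane section of $\Delta(f)$ and in verifying that the face polynomials truly agree on faces lying in $\{x_n = 0\}$, so that nondegeneracy really descends. Once this bookkeeping is in place, the rest is a routine d\'evissage from Theorem \ref{Thm2}.
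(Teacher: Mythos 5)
Your proposal is correct and follows essentially the same route as the paper: the base case is the pure-torus case (Theorem \ref{Thm2}, which the paper proves in situ via Theorem \ref{Thm1}, Proposition \ref{Dwork-surj} and the Adolphson--Sperber computation), and the inductive step removes one affine coordinate via a localization sequence relating $X$, the open piece $\mathbf{T}_k^{r+1}\times\mathbf{A}_k^{n-r-1}$, and the closed coordinate hyperplane $\mathbf{T}_k^{r}\times\mathbf{A}_k^{n-r-1}$, followed by the same inclusion--exclusion identity for $v_{S_r}(f)$. The only cosmetic differences are that the paper works with ordinary rigid cohomology and the Gysin/localization triangle of Berthelot and Tsuzuki rather than with compactly supported cohomology and excision (the two formulations are exchanged by the Poincar\'e duality you invoke anyway), and that you are more explicit than the paper about the combinatorial descent of the hypotheses to $\theta_n f$ and about the affine-vanishing input $H^{i}_{\rig}(X,f^{*}\mathcal{L}_{-\psi})=0$ for $i>n$ needed to truncate the long exact sequence.
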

\begin{proof}
First we consider the case $r=n$. Since $X$ is affine, smooth and pure of dimension $n$, by the Poincar\'{e} duality and the trace formula in the rigid cohomology theory, we have
\begin{equation}\label{L-fun}
    L(\mathbf{T}^n_k,f,t)=\prod_{i=0}^{n}\det(I-tq^n(F^{*})^{-1}|H_{\rig}^i(\mathbf{T}^n_k/K,f^{*}\mathcal{L}_{-\psi})^{(-1)^{i+1}}. 
\end{equation}
When $f$ is nondegenerate, we have $H_{i}(K_{\cdot}(B,\underline{\hat{D}}))=0$ for $i\neq 0$ and $\dim H_{0}(K_{\cdot}(B,\underline{\hat{D}}))=n!\Vol(\Delta(f))$ by \cite[Theorem 2.18]{Ado1989}. By Proposition \ref{Dwork-surj}, we have
$
H_{i}(K_{\cdot}(L^{\dagger}_{\delta},\underline{\hat{D}}))=0
$ for $i\neq 0$. By Theorem \ref{Thm1}, we have 
$$H^{i}_{\rig}(\mathbf{T}_k^n,f^{*}\mathcal{L}_{-\psi})\cong H_{n-i}(K_{\cdot}(L^{\dagger}_{\delta},\underline{\hat{D}}))=0
$$ for $i\neq n$. By \cite[(2.5)]{Ado1989} and (\ref{L-fun}), we have 
$$
\det(I-tq^n(F^{*})^{-1}|H_{\rig}^n(X/K,f^{*}\mathcal{L}_{-\psi}))=\det(1-tH_{0}(\alpha)|H_{0}(K_{\cdot}(B,\underline{\hat{D}}))).
$$ So $$
\dim H_{\rig}^n(X/K,f^{*}\mathcal{L}_{-\psi})=\dim H_{0}(K_{\cdot}(B,\underline{\hat{D}}))=n!\Vol(\Delta(f)).
$$ We then use induction on $n-r$. Suppose that Theorem \ref{Calculate-rig} holds for $n-(r+1)$. 
By \cite[2.3.1]{berthelot1997finitude} and \cite[Theorem 4.1.1]{tsuzuki1999gysin}, we have a distinguished triangle
\begin{displaymath}
R\Gamma_{\rig}(X,\mathcal{L}_{-\psi,f})\rightarrow R\Gamma_{\rig}(U,\mathcal{L}_{-\psi,f})\rightarrow R\Gamma_{\rig}(Z,\mathcal{L}_{-\psi,f_{r+1}})[-1]\rightarrow
\end{displaymath}
where 
$U=X-V(x_{r+1})\cong \mathbf{T}_{k}^{r+1}\times \mathbf{A}_k^{n-r-1}$ and $Z=V(x_{r+1})\cong \mathbf{T}_{k}^{r}\times \mathbf{A}_k^{n-r-1}$. One can check that $f$, $f_{r+1}$ are still commode with respect to $S_{r+1}$, and $f_{r+1}$ is nondegenerate with respect to $\Delta(f_{r+1})$. By the induction hypothesis, (i) holds and
\begin{eqnarray*}
    \dim H^n_{\rig}(X,\mathcal{L}_{-\psi,f})&=&\dim H^n_{\rig}(U,\mathcal{L}_{-\psi,f})-\dim H^{n-1}_{\rig}(Z,\mathcal{L}_{-\psi,f_{r+1}})
    \\&=&v_{S_{r+1}}(f)-v_{S_{r+1}}(f_{r+1})=v_{S_r}(f).
\end{eqnarray*}
\end{proof}



\end{document}